\documentclass{amsproc}
\usepackage{amssymb}
\usepackage{graphicx}
\usepackage{euscript}
\usepackage{mathrsfs} 
\usepackage{units}   
\usepackage{color}


\makeatletter
\@namedef{subjclassname@2010}{%
\textup{2010} Mathematics Subject Classification}
\makeatother

\numberwithin{equation}{section}

\newtheorem{thm}{Theorem}[section]
\newtheorem{cor}[thm]{Corollary}
\newtheorem{lem}[thm]{Lemma}
\newtheorem{pro}[thm]{Proposition}

\newtheorem*{thm*}{Theorem}

\theoremstyle{remark}
\newtheorem{rem}[thm]{Remark}

\theoremstyle{definition}

\newtheorem*{dfn*}{Definition}

\DeclareMathOperator{\lin}{\mbox{\sc lin}}
\DeclareMathOperator{\D}{d}
\DeclareMathOperator{\dess}{{\mathsf{Des}}}
\DeclareMathOperator{\dzii}{{\mathsf{Chi}}}

\DeclareMathOperator{\koo}{{\mathsf{root}}}
\DeclareMathOperator{\Koo}{{\mathsf{Root}}}
\DeclareMathOperator{\paa}{{\mathsf{par}}}

\newcommand*{\borel}[1]{{\mathfrak B}(#1)}
\newcommand*{\cbb}{\mathbb C}

\newcommand*{\des}[1]{{\dess(#1)}}
\newcommand*{\dz}[1]{{\EuScript D}(#1)}
\newcommand*{\dzi}[1]{\dzii(#1)}

\newcommand*{\dzn}[1]{{\EuScript D}^\infty(#1)}
\newcommand*{\ee}{\mathcal E}
\newcommand*{\escr}{{\mathscr{E}_V}}
\newcommand*{\ff}{\mathcal F}
\newcommand*{\Ge}{\geqslant}
\newcommand*{\hh}{\mathcal H}

\newcommand*{\is}[2]{\langle#1,#2\rangle}

\newcommand*{\kk}{\mathcal K}
\newcommand*{\Ko}[1]{\Koo(#1)}
\newcommand*{\lambdab}{{\boldsymbol\lambda}}

\newcommand*{\Le}{\leqslant}
\newcommand*{\mm}{\mathscr M}

\newcommand*{\nbb}{\mathbb N}

\newcommand*{\pa}[1]{\paa(#1)}

\newcommand*{\rbb}{\mathbb R}
\newcommand*{\slam}{S_{\boldsymbol \lambda}}
\newcommand*{\smalloplus}{\raise0pt\hbox{$\scriptscriptstyle \oplus$}}

\newcommand*{\tcal}{{\mathscr T}}

\newcommand*{\xx}{\mathcal X}
\newcommand*{\zbb}{\mathbb Z}

\hyphenation{equiv-al-ent-ly ortho-nor-mal pro-vided}
   \begin{document}
   \title[Unbounded subnormal weighted shifts on directed trees. II]
{Unbounded subnormal weighted shifts on directed
trees. II}
   \author[P.\ Budzy\'{n}ski]{Piotr Budzy\'{n}ski}
   \address{Katedra Zastosowa\'{n} Matematyki,
Uniwersytet Rolniczy w Krakowie, ul.\ Balicka 253c,
PL-30198 Krak\'ow}
   \email{piotr.budzynski@ur.krakow.pl}
   \author[Z.\ J.\ Jab{\l}o\'nski]{Zenon Jan Jab{\l}o\'nski}
\address{Instytut Matematyki, Uniwersytet Jagiello\'nski,
ul.\ \L ojasiewicza 6, PL-30348 Kra\-k\'ow, Poland}
   \email{Zenon.Jablonski@im.uj.edu.pl}
   \author[I.\ B.\ Jung]{Il Bong Jung}
   \address{Department of Mathematics, Kyungpook
National University, Daegu 702-701, Korea}
   \email{ibjung@knu.ac.kr}
   \author[J.\ Stochel]{Jan Stochel}
\address{Instytut Matematyki, Uniwersytet Jagiello\'nski,
ul.\ \L ojasiewicza 6, PL-30348 Kra\-k\'ow, Poland}
   \email{Jan.Stochel@im.uj.edu.pl}
   \thanks{The research of
the first, second and fourth authors was supported by
the MNiSzW (Ministry of Science and Higher Education)
grant NN201 546438 (2010-2013). The research of the
first author was partially supported by the NCN
(National Science Center) grant
DEC-2011/01/D/ST1/05805. The third author was
supported by Basic Science Research Program through
the National Research Foundation of Korea (NRF) funded
by the Ministry of Education, Science and Technology
(2009-0087565).}
    \subjclass[2010]{Primary 47B20, 47B37; Secondary 44A60}
\keywords{Directed tree, weighted shift on a directed
tree, Stieltjes moment sequence, subnormal operator}
   \begin{abstract}
Criteria for subnormality of unbounded injective
weighted shifts on leafless directed trees with one
branching vertex are proposed. The case of classical
weighted shifts is discussed. The relevance of an
inductive limit approach to subnormality of weighted
shifts on directed trees is revealed.
   \end{abstract}
   \maketitle
   \section{Introduction}
This paper is concerned with unbounded subnormal
weighted shifts on directed trees (see \cite{j-j-s}
for basic facts on weighted shifts on directed trees
and \cite{b-j-j-sA} for the literature on
subnormality). The only known general
characterizations of subnormality of unbounded Hilbert
space operators are due to Bishop and Foia\c{s}
\cite{bis,foi}, and Szafraniec \cite{FHSz}. These
characterizations refer to either semispectral
measures or elementary spectral measures. They seem to
be difficult to apply in the context of weighted
shifts on directed trees, especially when we want them
to be formulated in terms of weights. The situation is
much better when the operators in question have
invariant domains. For such operators the full
characterization of subnormality has been given in
\cite{StSz1} (see also \cite{c-s-sz} for a new
simplifying approach). Using this abstract tool, we
have invented in \cite[Theorem 5.1.1]{b-j-j-sA} a new
criterion (read:\ a sufficient condition) for
subnormality of weighted shifts on directed trees
written in terms of consistent systems of probability
measures. The main objective of the present paper is
to develop this idea in the context of directed trees
$\tcal_{\eta,\kappa}$ which are models of leafless
directed trees with one branching vertex (see Section
\ref{obv}). The characterizations of subnormality of
bounded weighted shifts on $\tcal_{\eta,\kappa}$ given
in \cite[Corollary 6.2.2]{j-j-s} (see also
\cite[Theorem 6.2.1]{j-j-s}) have been deduced from
the celebrated Lambert's theorem (cf.\ \cite{Lam})
which is no longer valid for unbounded operators (even
for weighted shifts on $\tcal_{\infty,\kappa}$, cf.\
\cite{j-j-s4}). However, as proved in Theorem
\ref{omega2}, all but one criteria in \cite[Corollary
6.2.2]{j-j-s} remain valid in the unbounded case. The
exception is essentially different (cf.\ Theorem
\ref{omega2}). Under the assumption of determinacy,
the criteria in Theorem \ref{omega2} become full
characterizations (see Theorem \ref{deter}). An
inductive limit approach to subnormality is shown to
be applicable in the context of weighted shifts on
directed trees (see Section \ref{sec-in}).

This paper is a sequel to \cite{b-j-j-sA}, to which we
refer readers for more background, discussion and
references.
   \section{Preliminaries}
Let $\zbb$ and $\cbb$ stand for the sets of integers
and complex numbers respectively. Denote by $\rbb_+$
the set of all nonnegative real numbers. Set $\zbb_+ =
\{0,1,2,3,\ldots\}$ and $\nbb = \{1,2,3,4,\ldots\}$.
We write $\borel{\rbb_+}$ for the $\sigma$-algebra of
all Borel subsets of $\rbb_+$ and $\delta_0$ for the
Borel probability measure on $\rbb_+$ concentrated at
$0$. A sequence $\{t_n\}_{n=0}^\infty \subseteq
\rbb_+$ is said to be a {\em Stieltjes moment
sequence} if there exists a positive Borel measure
$\mu$ on $\rbb_+$ such that $t_{n}=\int_0^\infty s^n
\D\mu(s)$ for every $n\in \zbb_+$, where
$\int_0^\infty$ means the integral over $\rbb_+$; such
$\mu$ is called a {\em representing measure} of
$\{t_n\}_{n=0}^\infty$. We say that a Stieltjes moment
sequence is {\em determinate} if it has only one
representing measure. A two-sided sequence
$\{t_n\}_{n=-\infty}^\infty\subseteq \rbb_+$ is said
to be a {\em two-sided Stieltjes moment sequence} if
there exists a positive Borel measure $\mu$ on
$(0,\infty)$ such that $t_{n}=\int_{(0,\infty)} s^n
\D\mu(s)$ for every $n \in \zbb$; such $\mu$ is called
a {\em representing measure} of
$\{t_n\}_{n=-\infty}^\infty$. It follows from
\cite[page 202]{ber} (see also \cite[Theorem
6.3]{j-t-w}) that
   \begin{align} \label{char2sid}
   \begin{minipage}{29em}
$\{t_n\}_{n=-\infty}^\infty \subseteq \rbb_+$ is a
two-sided Stieltjes moment sequence if and only if
$\{t_{n-k}\}_{n=0}^\infty$ is a Stieltjes moment
sequence for every $k \in \zbb_+$.
   \end{minipage}
   \end{align}
We refer the reader to \cite{ber,sim} for the
foundations of the theory of moment problems.

Let $A$ be an operator in a complex Hilbert space
$\hh$ (all operators considered in this paper are
linear). Denote by $\dz{A}$ the domain of $A$. Set
$\dzn{A} = \bigcap_{n=0}^\infty\dz{A^n}$. A linear
subspace $\ee$ of $\dz{A}$ is said to be a {\em core}
of $A$ if the graph of $A$ is contained in the closure
of the graph of the restriction $A|_{\ee}$ of $A$ to
$\ee$. A densely defined operator $S$ in $\hh$ is said
to be {\em subnormal} if there exists a complex
Hilbert space $\kk$ and a normal operator $N$ in $\kk$
such that $\hh \subseteq \kk$ (isometric embedding)
and $Sh = Nh$ for all $h \in \dz S$. We refer the
reader to \cite{b-s,weid} for background on unbounded
operators. We write $\lin \ff$ for the linear span of
a subset $\ff$ of $\hh$.

Let $\tcal=(V,E)$ be a directed tree ($V$ and $E$
stand for the sets of vertices and edges of $\tcal$,
respectively). Denote by $\koo$ the root of $\tcal$
(provided it exists) and write $\Ko{\tcal}=\{\koo\}$
if $\tcal$ has a root and $\Ko{\tcal} = \varnothing$
otherwise. Define $V^\circ=V\setminus \Ko{\tcal}$. Set
$\dzi u = \{v\in V\colon (u,v)\in E\}$ for $u \in V$.
A member of $\dzi u$ is called a {\em child} of $u$.
Denote by $\paa$ the partial function from $V$ to $V$
which assigns to each vertex $u\in V^\circ$ its parent
$\pa{u}$ (i.e.\ a unique $v \in V$ such that $(v,u)\in
E$). Set $\des u = \bigcup_{n=0}^\infty \{w \in
V\colon \paa^n(w)=u\}$ for $u\in V$. Note that the
terms in the union are pairwise disjoint. We refer the
reader to \cite{b-j-j-sA,j-j-s} for all facts about
directed trees needed in this paper.

Denote by $\ell^2(V)$ the Hilbert space of all square
summable complex functions on $V$ with the inner
product $\is fg = \sum_{u \in V} f(u)
\overline{g(u)}$. For $u \in V$, we define $e_u \in
\ell^2(V)$ to be the characteristic function of the
one-point set $\{u\}$. Then $\{e_u\}_{u\in V}$ is an
orthonormal basis of $\ell^2(V)$. Set $\escr = \lin
\{e_u\colon u \in V\}$. By a {\em weighted shift} on
$\tcal$ with weights $\lambdab=\{\lambda_v\}_{v \in
V^\circ} \subseteq \cbb$ we mean the operator $\slam$
in $\ell^2(V)$ defined by
   \begin{align*}
   \begin{aligned}
\dz {\slam} & = \{f \in \ell^2(V) \colon
\varLambda_\tcal f \in \ell^2(V)\},
   \\
\slam f & = \varLambda_\tcal f, \quad f \in \dz
{\slam},
   \end{aligned}
   \end{align*}
where $\varLambda_\tcal$ is the mapping defined on
functions $f\colon V \to \cbb$ via
   \begin{align} \label{lamtauf}
(\varLambda_\tcal f) (v) =
   \begin{cases}
\lambda_v \cdot f\big(\pa v\big) & \text{ if } v\in
V^\circ,
   \\
0 & \text{ if } v=\koo.
   \end{cases}
   \end{align}
Suppose $\escr \subseteq \dzn{\slam}$ and $u \in V$ is
such that $\dzi{u}\neq \varnothing$ and $\{\|\slam^n
e_v\|^2\}_{n=0}^\infty$ is a Stieltjes moment sequence
with a representing measure $\mu_v$ for every $v \in
\dzi u$. Then, following \cite{j-j-s,b-j-j-sA}, we say
that $\slam$ satisfies the {\em consistency condition}
at $u$ if\footnote{\;We adhere to the standard
convention that $0 \cdot \infty = 0$.}
   \begin{align} \label{alanconsi}
\sum_{v \in \dzi{u}} |\lambda_v|^2 \int_0^\infty \frac
1 s\, \D \mu_v(s) \Le 1,
   \end{align}

   Let us recall the main result of \cite{b-j-j-sA}.
   \begin{thm}[\mbox{\cite[Theorem 5.1.1]{b-j-j-sA}}]\label{main}
Assume that $\escr \subseteq \dzn{\slam}$ and there
exist a system $\{\mu_v\}_{v \in V}$ of Borel
probability measures on $\rbb_+$ and a system
$\{\varepsilon_v\}_{v \in V}$ of real
numbers\footnote{\;Note that \eqref{muu+} implies that
the system $\{\varepsilon_v\}_{v \in V}$ consists of
nonnegative real numbers.} that satisfy the following
condition
   \begin{align} \label{muu+}
\mu_u(\sigma) = \sum_{v \in \dzi u} |\lambda_v|^2
\int_\sigma \frac 1 s \D \mu_v(s) + \varepsilon_u
\delta_0(\sigma), \quad \sigma \in \borel{\rbb_+}, \,
u \in V.
      \end{align}
Then $\slam$ is subnormal.
   \end{thm}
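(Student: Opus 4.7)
\emph{Proof plan.} The plan is to invoke the Stochel--Szafraniec characterization of subnormality for operators with an invariant dense domain (\cite{StSz1}, in the streamlined form of \cite{c-s-sz}), applied to the restriction $\slam|_\escr$. The subspace $\escr$ is dense in $\ell^2(V)$, $\slam$-invariant (because $\slam e_u = \sum_{v \in \dzi u} \lambda_v e_v \in \escr$), and contained in $\dzn{\slam}$ by hypothesis; standard facts about weighted shifts on directed trees (cf.\ \cite{j-j-s}) further ensure $\escr$ is a core for $\slam$, so subnormality of $\slam|_\escr$ is equivalent to subnormality of $\slam$. The criterion reduces the task to producing a Borel $B(\ell^2(V))$-valued semispectral measure $F$ on $\rbb_+$ with
\begin{equation*}
\|\slam^n f\|^2 = \int_0^\infty s^n \D\is{F(s) f}{f}, \quad f \in \escr,\ n \in \zbb_+.
\end{equation*}

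The easy first step is to show that each $\mu_u$ represents the diagonal moment sequence $\{\|\slam^n e_u\|^2\}_{n=0}^\infty$. Integrating $s^n$ against \eqref{muu+} (the $\varepsilon_u\delta_0$ term drops out for $n \Ge 1$) gives
\begin{equation*}
\int_0^\infty s^n \D\mu_u(s) = \sum_{v \in \dzi u} |\lambda_v|^2 \int_0^\infty s^{n-1} \D\mu_v(s), \quad n \Ge 1,\ u \in V,
\end{equation*}
and the diagonal norms obey the same recursion $\|\slam^n e_u\|^2 = \sum_{v \in \dzi u} |\lambda_v|^2 \|\slam^{n-1} e_v\|^2$, by virtue of $\slam e_u = \sum_{v \in \dzi u} \lambda_v e_v$ together with the pairwise orthogonality of $\{\slam^{n-1} e_v\}_{v \in \dzi u}$ (whose supports lie in disjoint descendant subtrees of $\tcal$). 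An induction on $n$, started from $\mu_u(\rbb_+) = 1 = \|e_u\|^2$, closes this step.

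The main obstacle is to promote the diagonal data $\{\mu_u\}_{u\in V}$ to a genuine semispectral measure $F$, which entails specifying cross terms $\is{F(\sigma) e_u}{e_w}$ and verifying that every $F(\sigma)$ is a positive operator. I would do this through a Naimark-type dilation: construct an auxiliary Hilbert space $\kk \supseteq \ell^2(V)$ and a positive self-adjoint operator $N$ on $\kk$ extending $\slam$, and then set $F(\sigma) = P\chi_\sigma(N)|_{\ell^2(V)}$, where $P$ is the orthogonal projection onto $\ell^2(V)$. The hypothesis \eqref{muu+} is exactly the coherence needed for such a dilation to exist: iterating it along descendant paths in $\tcal$ expresses each $\mu_u$ as a combination of the pull-backs $|\lambda_{v|k}|^2 s^{-k}\D\mu_v(s)$ (where $\lambda_{v|k}$ denotes the product of weights along the length-$k$ path from $u$ down to $v$) plus atomic residues proportional to the $\varepsilon_\bullet$'s. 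This decomposition tells one how to assemble $\kk$ from $L^2$-spaces over the measures $\mu_v$, to realize $N$ as multiplication by the coordinate, and to embed $e_u$ as a distinguished unit vector in the $u$-th summand; the nonnegativity of $\varepsilon_u$ noted in the footnote of the statement is precisely what makes the embedding isometric, and the recursion makes multiplication by the coordinate restrict to $\slam$. Once the dilation is in place, $F$ inherits the required positivity, the moment identity holds on basis vectors by the first step and extends by sesquilinearity to $\escr$, and the \cite{StSz1}/\cite{c-s-sz} criterion concludes that $\slam$ is subnormal.
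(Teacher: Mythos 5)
Note first that this paper does not actually prove Theorem \ref{main}: it is imported verbatim from \cite[Theorem 5.1.1]{b-j-j-sA} and used as a black box, so there is no in-paper argument to compare against; I am therefore judging your proposal on its own terms. Your framing is sound: $\escr$ is dense, $\slam$-invariant, contained in $\dzn{\slam}$ by hypothesis, and a core of $\slam$ (a standard fact from \cite{j-j-s}), so it is legitimate to reduce to $\slam|_{\escr}$ and invoke the \cite{StSz1}/\cite{c-s-sz} characterization via a semispectral measure $F$ on $\rbb_+$ representing the moments $\|\slam^n f\|^2$. Your first step is also correct: integrating $s^n$ against \eqref{muu+} and using the recursion $\|\slam^n e_u\|^2=\sum_{v\in\dzi{u}}|\lambda_v|^2\|\slam^{n-1}e_v\|^2$ (orthogonality of descendant subtrees) gives $\int_0^\infty s^n\D\mu_u(s)=\|\slam^n e_u\|^2$ by induction on $n$, once one notes that \eqref{muu+} with $\sigma=\rbb_+$ forces $\mu_v(\{0\})=0$ whenever $\lambda_v\neq 0$, which is what justifies $\int_\sigma s^n\cdot s^{-1}\D\mu_v=\int_\sigma s^{n-1}\D\mu_v$.

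The gap is that everything after ``I would do this through a Naimark-type dilation'' is an announcement rather than an argument, and the announced construction is flawed as stated. The entire content of the theorem is the passage from the diagonal data $\{\mu_u\}_{u\in V}$ to a positive operator-valued $F$, i.e., the specification of the cross terms $\is{F(\sigma)e_u}{e_w}$ for comparable vertices $u,w$ and the proof that each $F(\sigma)\Ge 0$ on arbitrary finite combinations $\sum_u\alpha_u e_u$ (equivalently, the positive-definiteness conditions of \cite[Theorem 21]{c-s-sz}); none of this is carried out. Worse, a positive self-adjoint operator $N$ (multiplication by the nonnegative real coordinate) can neither ``extend $\slam$'' nor ``restrict to $\slam$'': that would force $\slam$ to be symmetric, whereas $\is{\slam e_u}{e_v}=\lambda_v\neq 0=\is{e_u}{\slam e_v}$ for any child $v$ of $u$ with nonzero weight. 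What \eqref{muu+} lets you dilate self-adjointly is the moment form $f\mapsto\|\slam^n f\|^2$ (data of $\slam^*\slam$-type, yielding $F(\sigma)=PE(\sigma)|_{\ell^2(V)}$), not $\slam$ itself; a normal extension would have to be multiplication by a complex variable on suitably coupled $L^2$-spaces, and the coupling across the branches at each branching vertex is exactly where \eqref{muu+} must do its work and where your sketch is silent. In particular the naive assembly $\kk=\bigoplus_v L^2(\mu_v)$ with $e_u$ a distinguished unit vector in the $u$-th summand fails outright: multiplication by the coordinate preserves each summand, while $\slam e_u$ must land in the summands attached to the children of $u$. Until the cross terms are defined and their positivity is proved from \eqref{muu+}, this is a plan, not a proof.
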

   \begin{dfn*}
We say that $u \in V$ is a {\em Stieltjes vertex}
(with respect to $\slam$) if $e_u \in \dzn{\slam}$ and
$\{\|\slam^n e_u\|^2\}_{n=0}^\infty$ is a Stieltjes
moment sequence.
   \end{dfn*}
In many cases, if each child of $u$ is a Stieltjes
vertex, then so is $u$ itself (cf.\ \cite[Lemma
4.1.3]{b-j-j-sA}). If $u$ is a Stieltjes vertex, then
in general its children are not (cf.\ \cite[Example
6.1.6]{j-j-s}). However, assuming that $u$ has only
one child $w$ and $\lambda_w \neq 0$, if $u$ is a
Stieltjes vertex, then so is $w$ (see \cite[Lemma
6.1.5]{j-j-s} for the bounded case).
   \begin{lem} \label{charsub-1}
Let $\slam$ be a weighted shift on $\tcal$ with
weights $\lambdab = \{\lambda_v\}_{v \in V^\circ}$ and
let $u, w \in V$ be such that $\dzi{u} = \{w\}$.
Suppose $u$ is a Stieltjes vertex and $\lambda_{w}\neq
0$. Then $w$ is a Stieltjes vertex and the following
two assertions hold\/{\em :}
   \begin{enumerate}
   \item[(i)] the mapping
$\mm_{w}^{\mathrm b}(\lambdab) \ni \mu \to \rho_{\mu}
\in \mm_{u}(\lambdab)$ defined by
   \begin{align*}
\rho_{\mu}(\sigma) = |\lambda_{w}|^2 \int_\sigma \frac
1 s \D \mu(s) + \Big(1 - |\lambda_{w}|^2 \int_0^\infty
\frac 1 s \D \mu(s)\Big) \delta_0(\sigma), \quad
\sigma \in \borel{\rbb_+},
   \end{align*}
is a bijection with the inverse $\mm_{u}(\lambdab) \ni
\rho \to \mu_{\rho} \in \mm_{w}^{\mathrm b}(\lambdab)$
given by
   \begin{align*}
\mu_{\rho} ( \sigma) = \frac 1 {|\lambda_{w}|^2}
\int_\sigma s \D \rho (s), \quad \sigma \in
\borel{\rbb_+},
   \end{align*}
where $\mm_{w}^{\mathrm b}(\lambdab)$ is the set of
all representing measures $\mu$ of $\{\|\slam^n
e_{w}\|^2\}_{n=0}^\infty$ such that $\int_0^\infty
\frac 1 s \D \mu(s) \Le \frac{1}{|\lambda_{w}|^2}$,
and $\mm_{u}(\lambdab)$ is the set of all representing
measures $\rho$ of $\{\|\slam^n
e_{u}\|^2\}_{n=0}^\infty$,
   \item[(ii)] if the Stieltjes moment
sequence $\{\|\slam^n e_{w}\|^2\}_{n=0}^\infty$ is
determinate, then so are $\{\|\slam^n
e_{u}\|^2\}_{n=0}^\infty$ and $\{\|\slam^{n+1}
e_{u}\|^2\}_{n=0}^\infty$.
   \end{enumerate}
   \end{lem}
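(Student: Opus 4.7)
The plan hinges on a single observation: since $\dzi{u} = \{w\}$, formula \eqref{lamtauf} gives $\slam e_u = \lambda_w e_w$, which iterates to $\slam^{n+1} e_u = \lambda_w \slam^n e_w$ for every $n \in \zbb_+$. This yields at once that $e_w \in \dzn{\slam}$ (solve for $\slam^n e_w = \lambda_w^{-1} \slam^{n+1} e_u$ and use $e_u \in \dzn{\slam}$) and the crucial scalar identity
\begin{align*}
\|\slam^{n+1} e_u\|^2 = |\lambda_w|^2 \|\slam^n e_w\|^2, \quad n \in \zbb_+,
\end{align*}
which, combined with $\|e_u\|^2 = 1$, reduces everything to a one-step shift between two Stieltjes moment sequences. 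In particular, given any $\rho \in \mm_u(\lambdab)$ (non-empty since $u$ is a Stieltjes vertex), a direct calculation shows that $\mu_\rho(\sigma) := |\lambda_w|^{-2} \int_\sigma s \D\rho(s)$ represents $\{\|\slam^n e_w\|^2\}_{n=0}^\infty$, establishing that $w$ is a Stieltjes vertex.

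For part (i), I would first check that the two constructions land in the claimed target sets and then verify they are mutually inverse. The measure $\mu_\rho$ above has no atom at $0$ and satisfies
\begin{align*}
\int_0^\infty \frac{1}{s}\D\mu_\rho(s) = \frac{\rho((0,\infty))}{|\lambda_w|^2} \Le \frac{1}{|\lambda_w|^2},
\end{align*}
so $\mu_\rho \in \mm_w^{\mathrm b}(\lambdab)$. Conversely, for $\mu \in \mm_w^{\mathrm b}(\lambdab)$ the coefficient of $\delta_0$ in $\rho_\mu$ is nonnegative precisely by the defining constraint on $\mm_w^{\mathrm b}(\lambdab)$, and a direct moment computation gives $\int_0^\infty s^n \D\rho_\mu(s) = \|\slam^n e_u\|^2$ for every $n \in \zbb_+$ (total mass $1$ when $n=0$; for $n \geq 1$ apply the key identity), so $\rho_\mu \in \mm_u(\lambdab)$. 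The two compositions $\rho \mapsto \mu_\rho \mapsto \rho_{\mu_\rho}$ and $\mu \mapsto \rho_\mu \mapsto \mu_{\rho_\mu}$ then return the original measures by a straightforward manipulation; the point to watch is that the constraint $\int \frac{1}{s} \D\mu < \infty$, with the convention $0 \cdot \infty = 0$, forces every $\mu \in \mm_w^{\mathrm b}(\lambdab)$ to vanish on $\{0\}$, so no mass is lost when passing between the weights $s$ and $s^{-1}$.

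For (ii), non-emptiness of $\mm_w^{\mathrm b}(\lambdab)$ has just been established via $\rho \mapsto \mu_\rho$. If $\{\|\slam^n e_w\|^2\}_{n=0}^\infty$ is determinate, then the larger set of all its representing measures is a singleton, hence so is $\mm_w^{\mathrm b}(\lambdab)$, and the bijection of (i) forces $\mm_u(\lambdab)$ to be a singleton, which is precisely the determinacy of $\{\|\slam^n e_u\|^2\}_{n=0}^\infty$. For the shifted sequence, note that $\{\|\slam^{n+1} e_u\|^2\}_{n=0}^\infty = \{|\lambda_w|^2 \|\slam^n e_w\|^2\}_{n=0}^\infty$, and the rescaling $\nu \leftrightarrow |\lambda_w|^{-2}\nu$ sets up a bijection between the representing measures of the two sides, so determinacy is inherited.

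The main obstacle, such as it is, lies in the bookkeeping at $s=0$: the correction term $(1 - |\lambda_w|^2 \int_0^\infty \frac{1}{s}\D\mu)\delta_0$ in the definition of $\rho_\mu$ and the boundedness constraint singling out $\mm_w^{\mathrm b}(\lambdab)$ from the full set of representing measures of $e_w$ are present precisely to turn the formal $s \leftrightarrow s^{-1}$ correspondence into an exact bijection. Once this is handled, the rest is elementary manipulation of Stieltjes moment sequences.
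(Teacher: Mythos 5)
Your proof is correct and follows the same route as the paper: both hinge on the identity $\|\slam^{n}e_w\|^2=|\lambda_w|^{-2}\|\slam^{n+1}e_u\|^2$ obtained from $\slam e_u=\lambda_w e_w$. The only difference is that the paper then delegates the entire measure-theoretic bijection and the determinacy transfer to an external result (\cite[Lemma 2.4.1]{b-j-j-sA} with $\vartheta=1$ and $t_n=\|\slam^{n+1}e_u\|^2$), whereas you verify that content directly, correctly handling the delicate points (the atom at $0$, the convention $0\cdot\infty=0$, and the non-emptiness of $\mm_w^{\mathrm b}(\lambdab)$ before concluding it is a singleton).
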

   \begin{proof}  Since $e_{u} \in
\dzn{\slam}$, $\dzi{u} = \{w\}$ and $\lambda_{w} \neq
0$, we infer from \cite[Proposition 3.1.3]{j-j-s} that
$e_{w} = \frac{1}{\lambda_{w}} \slam e_{u} \in
\dzn{\slam}$, and thus
  \begin{align*}
\|\slam^n e_{w}\|^2 = \frac 1
{|\lambda_{w}|^2}\|\slam^{n+1} e_{u}\|^2, \quad n \in
\zbb_+.
   \end{align*}
This and \cite[Lemma 2.4.1]{b-j-j-sA} with
$\vartheta=1$ and $t_n=\|\slam^{n+1} e_{u}\|^2$
complete the proof.
   \end{proof}
   \section{\label{cws}Classical weighted shifts}
In this section we focus on classical weighted shifts
(see \cite{shi,ml} for both the bounded and the
unbounded cases). As shown in \cite[Remark
3.1.4]{j-j-s}, unilateral and bilateral weighted shifts
can be regarded as weighted shifts on directed trees
\mbox{$(\zbb_+, \{(n,n+1)\colon n \in \zbb_+\})$} and
\mbox{$(\zbb, \{(n,n+1)\colon n \in \zbb\})$}
respectively. The reader should be aware that we
enumerate weights of unilateral and bilateral weighted
shifts in accordance with our notation, i.e.,
   \begin{align} \label{notnew}
\slam e_n = \lambda_{n+1} e_{n+1}, \quad n\in \zbb_+
\;\; (\textrm{respectively:\ } n \in \zbb).
   \end{align}

Using our approach, we can derive the
Berger-Gellar-Wallen criterion for subnormality of
injective unilateral weighted shifts (see
\cite{g-w,hal2} for the bound\-ed case and
\cite[Theorem 4]{StSz1} for the unbounded one).
   \begin{thm} \label{b-g-w}
If $\slam$ is a unilateral weighted shift with nonzero
weights $\lambdab = \{\lambda_n\}_{n=1}^\infty$
$($cf.\ \eqref{notnew}$)$, then the following three
conditions are equivalent\/{\em :}
   \begin{enumerate}
   \item[(i)] $\slam$ is subnormal,
   \item[(ii)]  $(1, |\lambda_1|^2, |\lambda_1
\lambda_2|^2, |\lambda_1 \lambda_2 \lambda_3|^2,
\ldots)$ is a Stieltjes moment sequence,
   \item[(iii)]
$k$ is a Stieltjes vertex for every $k \in \zbb_+$.
   \end{enumerate}
   \end{thm}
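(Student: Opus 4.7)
The tree here is $\tcal = (\zbb_+, \{(n,n+1)\colon n \in \zbb_+\})$, in which every vertex $k$ has the unique child $k+1$. Since $\slam^n e_k = \lambda_{k+1}\cdots\lambda_{k+n}\,e_{k+n}$ for all $k, n \in \zbb_+$, we have $\escr \subseteq \dzn{\slam}$ automatically, and (ii) is exactly the assertion that $0$ is a Stieltjes vertex.

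My plan is to prove the implications (iii)~$\Rightarrow$~(ii), (ii)~$\Rightarrow$~(iii)~$\Rightarrow$~(i), and (i)~$\Rightarrow$~(iii). The first is the case $k = 0$. For (ii)~$\Rightarrow$~(iii), since $\dzi{k} = \{k+1\}$ and $\lambda_{k+1} \neq 0$, Lemma \ref{charsub-1} promotes Stieltjes-ness from $k$ to $k+1$, so iteration starting at $0$ gives (iii). For (i)~$\Rightarrow$~(iii), I would invoke the standard fact that if $\slam \subseteq N$ for a normal $N$ and $e_k \in \dzn{\slam}$, then the push-forward of $\langle E(\cdot)e_k, e_k\rangle$ ($E$ the spectral measure of $N$) by $z \mapsto |z|^2$ is a representing measure for $\{\|\slam^n e_k\|^2\}_{n=0}^\infty$; alternatively cite \cite[Theorem 4]{StSz1}.

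The main work is (ii)~$\Rightarrow$~(i), which I would prove by constructing a system $\{\mu_k\}_{k \in \zbb_+}$ of Borel probability measures on $\rbb_+$ and nonnegative reals $\{\varepsilon_k\}_{k \in \zbb_+}$ satisfying \eqref{muu+}, and then invoking Theorem~\ref{main}. Choose $\mu_0$ to be any representing measure of $\{\|\slam^n e_0\|^2\}_{n=0}^\infty$ (a probability measure, since $\|e_0\|^2 = 1$), and define inductively
\[
\mu_{k+1}(\sigma) = \frac{1}{|\lambda_{k+1}|^2}\int_\sigma s\,\D\mu_k(s), \quad \sigma \in \borel{\rbb_+},\; k \in \zbb_+.
\]
Applying Lemma~\ref{charsub-1}(i) at $u = k$, $w = k+1$, the measure $\mu_{k+1}$ lies in $\mm_{k+1}^{\mathrm b}(\lambdab) \subseteq \mm_{k+1}(\lambdab)$, so the construction can be iterated, and the bijection yields $\mu_k = \rho_{\mu_{k+1}}$, i.e.
\[
\mu_k(\sigma) = |\lambda_{k+1}|^2 \int_\sigma \frac{1}{s}\,\D\mu_{k+1}(s) + \varepsilon_k \delta_0(\sigma), \quad \sigma \in \borel{\rbb_+},
\]
with $\varepsilon_k = 1 - |\lambda_{k+1}|^2\int_0^\infty \frac{1}{s}\,\D\mu_{k+1}(s) \Ge 0$. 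This is exactly \eqref{muu+} at each vertex $k$, so Theorem~\ref{main} gives subnormality of $\slam$.

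The only subtle point is verifying that at each step the iteratively defined $\mu_{k+1}$ really is a representing measure for $\{\|\slam^n e_{k+1}\|^2\}_{n=0}^\infty$ satisfying the integrability bound $\int_0^\infty \frac{1}{s}\,\D\mu_{k+1}(s) \Le 1/|\lambda_{k+1}|^2$; both properties are packaged into the bijection of Lemma~\ref{charsub-1}(i), which does essentially all the heavy lifting and reduces the proof to a clean induction.
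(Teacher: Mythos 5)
Your proposal is correct and follows essentially the same route as the paper: the measures you build inductively via Lemma \ref{charsub-1} are exactly the paper's $\mu_k(\sigma)=\frac{1}{\|\slam^k e_0\|^2}\int_\sigma s^k\,\D\mu(s)$, and the conclusion is drawn from Theorem \ref{main} in both cases, with (i)$\Rightarrow$(iii) obtained from the same spectral-measure push-forward fact the paper cites. The only cosmetic difference is that you verify the consistency relation \eqref{muu+} by packaging it through the bijection of Lemma \ref{charsub-1}\,(i) (and add a direct (ii)$\Rightarrow$(iii) leg) rather than by the paper's one-line direct computation.
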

   \begin{proof}
It is clear that $\escr \subseteq \dzn{\slam}$.

(i)$\Rightarrow$(iii) Employ \cite[Proposition
4.1.1]{b-j-j-sA}.

(iii)$\Rightarrow$(ii) This is evident, because the
sequence $(1, |\lambda_1|^2, |\lambda_1 \lambda_2|^2,
|\lambda_1 \lambda_2 \lambda_3|^2, \ldots)$ coincides
with $\{\|\slam^n e_0\|^2\}_{n=0}^\infty$.

(ii)$\Rightarrow$(i) Let $\mu$ be a representing measure of
the Stieltjes moment sequence $\{\|\slam^n
e_0\|^2\}_{n=0}^\infty$ (which in general may not be
determinate, cf.\ \cite{sz3}). Define the sequence
$\{\mu_n\}_{n=0}^\infty$ of Borel probability measures on
$\rbb_+$ by
   \begin{align*}
\mu_n(\sigma) = \frac{1}{\|\slam^n e_0\|^2}
\int_{\sigma} s^n \D \mu(s), \quad \sigma \in
\borel{\rbb_+}, \, n \in \zbb_+.
   \end{align*}
It is then clear that
   \begin{align*}
\mu_0(\sigma) &=|\lambda_{1}|^2 \int_\sigma
\frac{1}{s} \D\mu_{1}(s) + \mu(\{0\}) \delta_0
(\sigma), \quad \sigma \in \borel{\rbb_+},
   \\
\mu_n(\sigma) &= |\lambda_{n+1}|^2 \int_\sigma
\frac{1}{s} \D\mu_{n+1}(s), \quad \sigma \in
\borel{\rbb_+},\, n \in \nbb,
   \end{align*}
which means that the systems $\{\mu_n\}_{n=0}^\infty$
and $\{\varepsilon_n\}_{n=0}^\infty := (\mu(\{0\}), 0,
0, \ldots)$ satisfy the assumptions of Theorem
\ref{main}. This completes the proof.
   \end{proof}
Now we prove an analogue of the Berger-Gellar-Wallen
criterion for subnormality of injective bilateral
weighted shifts (see \cite[Theorem II.6.12]{con2} for
the bounded case and \cite[Theorem 5]{StSz1} for the
unbounded one).
   \begin{thm} \label{b-g-w-2}
If $\slam$ is a bilateral weighted shift with nonzero
weights $\lambdab=\{\lambda_n\}_{n \in \zbb}$ $($cf.\
\eqref{notnew}$)$, then the following four conditions
are equivalent\/{\em :}
   \begin{enumerate}
   \item[(i)] $\slam$ is subnormal,
   \item[(ii)] the two-sided sequence $\{t_n\}_{n=-\infty}^\infty$
defined by
   \begin{align*}
t_n =
   \begin{cases}
|\lambda_1 \cdots \lambda_{n}|^2 & \text{ for } n \Ge
1,
   \\
1 & \text{ for } n=0,
   \\
|\lambda_{n+1} \cdots \lambda_{0}|^{-2} & \text{ for }
n \Le -1,
   \end{cases}
   \end{align*}
is a two-sided Stieltjes moment sequence,
   \item[(iii)]
$-k$ is a Stieltjes vertex for infinitely many
nonnegative integers $k$,
   \item[(iv)]
$k$ is a Stieltjes vertex for every $k \in \zbb$.
   \end{enumerate}
   \end{thm}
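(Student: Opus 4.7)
The plan is to close the cycle (i)$\Rightarrow$(iv)$\Rightarrow$(iii)$\Rightarrow$(ii)$\Rightarrow$(i), in direct parallel with the unilateral argument of Theorem \ref{b-g-w}. Because every vertex of the bilateral tree has exactly one child, $\slam^n e_k = (\lambda_{k+1} \cdots \lambda_{k+n})\, e_{k+n}$ for all $k \in \zbb$ and $n \in \zbb_+$, so $\escr \subseteq \dzn{\slam}$ comes for free and $\|\slam^n e_k\|^2 = t_{n+k}/t_k$.

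The implication (i)$\Rightarrow$(iv) follows directly from \cite[Proposition 4.1.1]{b-j-j-sA}, and (iv)$\Rightarrow$(iii) is trivial. For (iii)$\Rightarrow$(ii), a Stieltjes vertex at $-k$ forces $\{t_{n-k}/t_{-k}\}_{n=0}^\infty$, and hence $\{t_{n-k}\}_{n=0}^\infty$, to be a Stieltjes moment sequence. Granted this for infinitely many $k \in \zbb_+$, any fixed $k_0 \in \zbb_+$ is handled by picking some $k \Ge k_0$ with a representing measure $\nu$ of $\{t_{n-k}\}_{n=0}^\infty$ and observing that $s^{k-k_0} \D \nu(s)$ is a finite positive Borel measure representing $\{t_{n-k_0}\}_{n=0}^\infty$; the characterization \eqref{char2sid} then yields (ii).

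The heart of the proof is (ii)$\Rightarrow$(i), which I would establish via Theorem \ref{main}, mimicking the construction used in Theorem \ref{b-g-w}. Let $\mu$ be a representing measure of $\{t_n\}_{n \in \zbb}$; by definition it lives on $(0,\infty)$. Set
\begin{align*}
\mu_k(\sigma) = \frac{1}{t_k} \int_\sigma s^k \D \mu(s), \quad \sigma \in \borel{\rbb_+},\; k \in \zbb,
\end{align*}
and $\varepsilon_k = 0$ for every $k \in \zbb$. Each $\mu_k$ is then a Borel probability measure concentrated on $(0,\infty)$, and the identity $t_{k+1} = |\lambda_{k+1}|^2 t_k$, valid for every $k \in \zbb$ by the piecewise definition of $t_n$, reduces the verification of \eqref{muu+} to a one-line computation with trivial $\delta_0$ contribution. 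Theorem \ref{main} then delivers subnormality of $\slam$. The only possible nuisance is ensuring the integrals $\int s^k \D \mu$ remain finite when $k < 0$, but this is precisely the content of $\mu$ being a two-sided representing measure, so no genuine obstacle arises beyond the bookkeeping.
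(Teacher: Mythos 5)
Your proof is correct, and the decisive implication (ii)$\Rightarrow$(i) --- defining $\mu_k(\sigma)=t_k^{-1}\int_\sigma s^k\,\D\mu(s)$ with $\varepsilon_k\equiv 0$ and feeding this system into Theorem \ref{main} via $t_{k+1}=|\lambda_{k+1}|^2t_k$ --- is exactly the paper's argument. Where you genuinely diverge is in closing the cycle from (iii). The paper proves (iii)$\Rightarrow$(iv) by invoking Lemma \ref{charsub-1}: since each vertex has exactly one child carrying a nonzero weight, the Stieltjes property propagates forward from a vertex to its child, so Stieltjes vertices at infinitely many $-k$ force every $k\in\zbb$ to be one; then (iv)$\Rightarrow$(ii) follows from $t_{n-k}=t_{-k}\|\slam^n e_{-k}\|^2$ together with \eqref{char2sid}. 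You instead go straight from (iii) to (ii): given a representing measure $\nu$ of $\{t_{n-k}\}_{n=0}^\infty$ for some $k\Ge k_0$, the measure $s^{k-k_0}\,\D\nu(s)$ is a positive Borel measure on $\rbb_+$ of finite total mass $t_{-k_0}$ that represents $\{t_{n-k_0}\}_{n=0}^\infty$, and \eqref{char2sid} does the rest. This pushforward trick is self-contained and bypasses Lemma \ref{charsub-1} entirely (your cycle $\mathrm{(i)}\Rightarrow\mathrm{(iv)}\Rightarrow\mathrm{(iii)}\Rightarrow\mathrm{(ii)}\Rightarrow\mathrm{(i)}$ still establishes the full equivalence, so nothing is lost logically). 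Both routes are sound; the paper's has the side benefit of exercising Lemma \ref{charsub-1}, whose bijection between representing measures at adjacent vertices is reused later in the proof of Theorem \ref{deter}, whereas yours is slightly more elementary at this step.
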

   \begin{proof}
Clearly, $\escr \subseteq \dzn{\slam}$.

(i)$\Rightarrow$(iv) Employ \cite[Proposition
4.1.1]{b-j-j-sA}.

(iv)$\Rightarrow$(iii) Evident.

(iii)$\Rightarrow$(iv) Apply Lemma \ref{charsub-1}.

(iv)$\Rightarrow$(ii) Since $t_{n-k} = t_{-k}
\|\slam^n e_{-k}\|^2$ for all $n \in \zbb$ and $k\in
\zbb_+$, we can apply the characterization
\eqref{char2sid}.

(ii)$\Rightarrow$(i) Let $\mu$ be a representing
measure of $\{t_n\}_{n=-\infty}^\infty$. Define the
two-sided sequence $\{\mu_n\}_{n=-\infty}^\infty$ of
Borel probability measures on $\rbb_+$ by (note that
$\mu(\{0\})=0$)
   \begin{align*}
\mu_n(\sigma) = \frac{1}{\|\slam^n e_0\|^2}
\int_{\sigma} s^n \D \mu(s), \quad \sigma \in
\borel{\rbb_+}, \, n \in \zbb.
   \end{align*}
We easily verify that
   \begin{align*}
\mu_n(\sigma) &= |\lambda_{n+1}|^2 \int_\sigma
\frac{1}{s} \D\mu_{n+1}(s), \quad \sigma \in
\borel{\rbb_+},\, n \in \zbb,
   \end{align*}
which means that the systems
$\{\mu_n\}_{n=-\infty}^\infty$ and
$\{\varepsilon_n\}_{n=-\infty}^\infty$ with
$\varepsilon_n\equiv 0$ satisfy the assumptions of
Theorem \ref{main}. This completes the proof.
   \end{proof}
In view of Theorems \ref{b-g-w} and \ref{b-g-w-2}, the
necessary condition for subnormality of general
operators given in \cite[Proposition 3.2.1]{b-j-j-sA}
becomes sufficient in the case of injective classical
weighted shifts. To the best of our knowledge, the
class of injective classical weighted shifts seems to
be the only one\footnote{\;Not mentioning symmetric
operators which are always subnormal, cf.\
\cite[Appendix I.2]{a-g}.} for which this phenomenon
occurs regardless of whether or not the operators in
question have sufficiently many qusi-analytic vectors
(see \cite{StSz0} for more details; see also
\cite[Sections 3.2 and 5.3]{b-j-j-sA}).
   \section{\label{obv}One branching vertex} Our next
aim is to discuss subnormality of weighted shifts with
nonzero weights on directed trees that have only one
{\em branching vertex}, i.e., a vertex with at least
two children. By \cite[Proposition 5.2.1]{b-j-j-sA},
there is no loss of generality in assuming that the
directed tree under consideration is countably
infinite and leafless. Such directed trees are one
step more complicated than those involved in the
definitions of classical weighted shifts (see Section
\ref{cws}). Countably infinite and leafless directed
trees with one branching vertex can be modelled as
follows (see Figure 1). Given $\eta,\kappa \in \zbb_+
\sqcup \{\infty\}$ with $\eta \Ge 2$, we define the
directed tree $\tcal_{\eta,\kappa} = (V_{\eta,\kappa},
E_{\eta,\kappa})$ by\footnote{\;The symbol
``\,$\sqcup$\,'' denotes disjoint union of sets.}
   \allowdisplaybreaks
   \begin{align*}
   \begin{aligned}
V_{\eta,\kappa} & = \big\{-k\colon k\in J_\kappa\big\}
\sqcup \{0\} \sqcup \big\{(i,j)\colon i\in J_\eta,\,
j\in \nbb\big\},
   \\
E_{\eta,\kappa} & = E_\kappa \sqcup
\big\{(0,(i,1))\colon i \in J_\eta\big\} \sqcup
\big\{((i,j),(i,j+1))\colon i\in J_\eta,\, j\in
\nbb\big\},
   \\
E_\kappa & = \big\{(-k,-k+1) \colon k\in
J_\kappa\big\},
   \end{aligned}
   \end{align*}
where $J_\iota := \{k \in \nbb\colon k\Le \iota\}$ for
$\iota \in \zbb_+ \sqcup \{\infty\}$.
   \vspace{1.5ex}
   \begin{center}
   \includegraphics[width=7cm]
{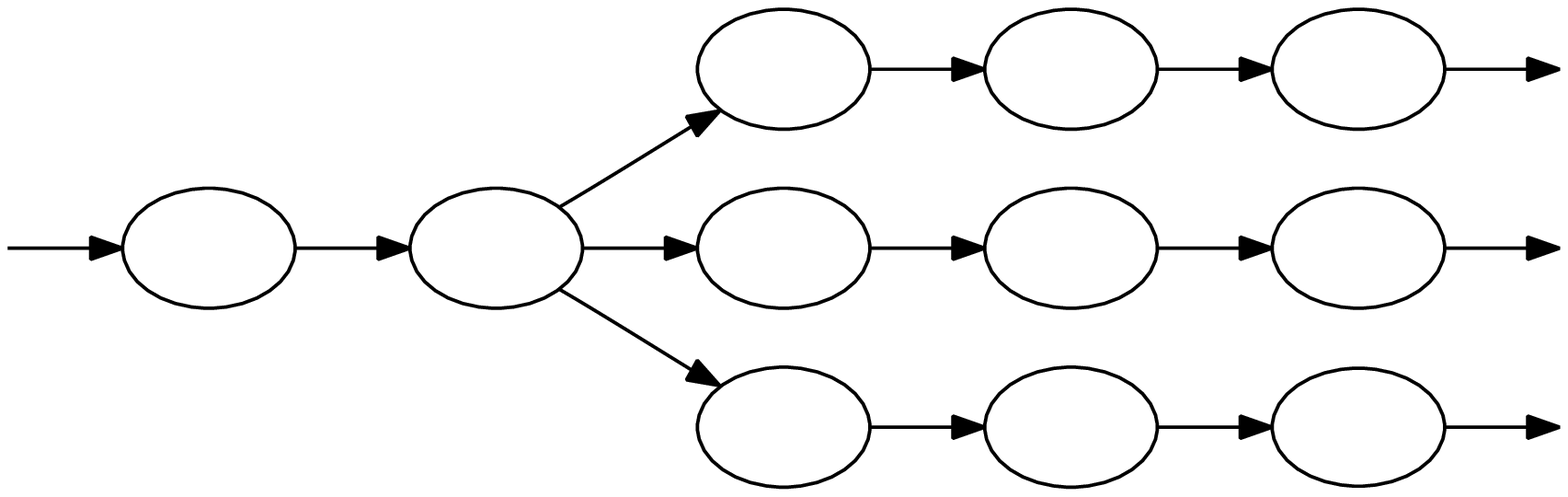}
   \\[1.5ex]
{\small {\sf Figure 1}}
   \end{center}
   \vspace{1ex}
   If $\kappa < \infty$, then the directed tree
$\tcal_{\eta,\kappa}$ has the root $-\kappa$. If
$\kappa=\infty$, then $\tcal_{\eta,\infty}$ is
rootless. In all cases, $0$ is the branching vertex of
$\tcal_{\eta,\kappa}$.

{\em Caution.} One of the advantages of considering
weighted shifts $\slam$ on $\tcal_{\eta,\kappa}$ is
that $e_0\in \dzn{\slam}$ if and only if
$\mathscr{E}_{V_{\eta,\kappa}} \subseteq \dzn{\slam}$.

   We begin by showing that all but one criteria for
subnormality of injective weighted shifts on
$\tcal_{\eta,\kappa}$ given in \cite[Corollary
6.2.2]{j-j-s} remain valid in the unbounded case. The
only exception is condition (iii) in Theorem
\ref{omega2} below which is an unbounded variant
(essentially different) of condition \mbox{(ii-b)} in
\cite[Corollary 6.2.2]{j-j-s}. Below, we adhere to the
notation $\lambda_{i,j}$ instead of a more formal
expression $\lambda_{(i,j)}$.
   \begin{thm}\label{omega2}
Let $\slam$ be a weighted shift on
$\tcal_{\eta,\kappa}$ with nonzero weights $\lambdab =
\{\lambda_v\}_{v \in V_{\eta,\kappa}^\circ}$ such that
$e_0 \in \dzn{\slam}$. Suppose that there exists a
sequence $\{\mu_i\}_{i=1}^\eta$ of Borel probability
measures on $\rbb_+$ such that
   \begin{align} \label{zgod0}
\int_0^\infty s^n \D \mu_i(s) =
\Big|\prod_{j=2}^{n+1}\lambda_{i,j}\Big|^2, \quad n
\in \nbb, \; i \in J_\eta.
   \end{align}
Then $\slam$ is subnormal provided one of the
following four conditions holds\/{\em :}
   \begin{enumerate}
   \item[(i)]  $\kappa=0$ and
   \begin{align}  \label{zgod}
\sum_{i=1}^\eta |\lambda_{i,1}|^2 \int_0^\infty \frac
1 s\, \D \mu_i(s) \Le 1,
   \end{align}
   \item[(ii)] $0 < \kappa < \infty$ and
   \begin{align} \label{zgod'}
\sum_{i=1}^\eta |\lambda_{i,1}|^2 \int_0^\infty \frac
1 s\, \D \mu_i(s) &= 1,
   \\
\Big|\prod_{j=0}^{l-1} \lambda_{-j}\Big|^2
\sum_{i=1}^\eta|\lambda_{i,1}|^2 \int_0^\infty \frac 1
{s^{l+1}} \D \mu_i(s) & = 1, \quad l \in J_{\kappa-1},
\label{widly1}
   \\
\Big|\prod_{j=0}^{\kappa-1}
\lambda_{-j}\Big|^2\sum_{i=1}^\eta|\lambda_{i,1}|^2
\int_0^\infty \frac 1 {s^{\kappa+1}} \D \mu_i(s) & \Le
1, \label{widly1'}
   \end{align}
   \item[(iii)] $0 < \kappa < \infty$ and there exists
a Borel probability measure $\nu$ on $\rbb_+$ such
that
   \begin{align} \label{prob}
\int_0^\infty s^n \D \nu(s) & =
\Big|\prod_{j=\kappa-n}^{\kappa-1}\lambda_{-j}\Big|^2,
\quad n \in J_\kappa,
   \\     \label{prob'}
\int_\sigma s^\kappa \D \nu(s) & =
\Big|\prod_{j=0}^{\kappa-1} \lambda_{-j}\Big|^2 \;
\sum_{i=1}^\eta |\lambda_{i,1}|^2 \int_\sigma
\frac{1}{s} \D \mu_i(s), \quad \sigma \in
\borel{\rbb_+},
   \end{align}
   \item[(iv)] $\kappa=\infty$ and equalities \eqref{zgod'}
and \eqref{widly1} are satisfied.
   \end{enumerate}
   \end{thm}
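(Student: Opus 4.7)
The plan is to apply Theorem~\ref{main} by constructing an explicit system $\{\mu_v\}_{v \in V_{\eta,\kappa}}$ of Borel probability measures together with a system $\{\varepsilon_v\}_{v \in V_{\eta,\kappa}} \subseteq \rbb_+$ satisfying \eqref{muu+} at every vertex. Since each $\mu_i$ is a probability measure, \eqref{zgod0} says precisely that $\mu_i$ is a representing measure of $\{\|\slam^n e_{(i,1)}\|^2\}_{n=0}^\infty$.

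On the forward branches I imitate the construction from the proof of Theorem~\ref{b-g-w}: for $i \in J_\eta$ and $j \in \nbb$ I put $\mu_{(i,1)} := \mu_i$ and, more generally,
\begin{equation*}
\mu_{(i,j)}(\sigma) := \frac{1}{\|\slam^{j-1} e_{(i,1)}\|^2} \int_\sigma s^{j-1}\, \D\mu_i(s), \qquad \varepsilon_{(i,j)} := 0,
\end{equation*}
so that a routine calculation verifies \eqref{muu+} at every $(i,j)$. At the branching vertex I define
\begin{equation*}
\mu_0(\sigma) := \sum_{i=1}^\eta |\lambda_{i,1}|^2 \int_\sigma \frac{1}{s}\, \D\mu_i(s) + \varepsilon_0\, \delta_0(\sigma),\quad \varepsilon_0 := 1 - \sum_{i=1}^\eta |\lambda_{i,1}|^2 \int_0^\infty \frac{1}{s}\, \D\mu_i(s).
\end{equation*}
In case~(i) we have $\koo = 0$ and there is no backward chain to build; hypothesis \eqref{zgod} gives $\varepsilon_0 \Ge 0$, so Theorem~\ref{main} applies immediately. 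In cases~(ii)--(iv), hypothesis \eqref{zgod'} (which in~(iii) follows from \eqref{prob'} at $\sigma=\rbb_+$ combined with \eqref{prob} for $n=\kappa$) forces $\varepsilon_0 = 0$, so $\mu_0$ is a Borel probability measure.

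For cases~(ii) and~(iv) I extend the system along the backward chain by setting $\varepsilon_{-l}:=0$ and
\begin{equation*}
\mu_{-l}(\sigma) := \Big|\prod_{j=0}^{l-1} \lambda_{-j}\Big|^2 \sum_{i=1}^\eta |\lambda_{i,1}|^2 \int_\sigma \frac{1}{s^{l+1}}\, \D\mu_i(s)
\end{equation*}
for $l \in J_{\kappa-1}$ in case~(ii) (respectively all $l \in \nbb$ in case~(iv)); the equalities in \eqref{widly1} are precisely what makes every such $\mu_{-l}$ a probability measure. In~(ii) I then take $\mu_{-\kappa}$ via the same formula with $l=\kappa$ together with $\varepsilon_{-\kappa}:=1-\mu_{-\kappa}(\rbb_+) \Ge 0$, where nonnegativity comes from \eqref{widly1'}. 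A direct computation, noting that each $\mu_{-l}$ with $l\Ge 1$ has no atom at $0$, verifies the recursion \eqref{muu+} at every vertex of the chain. In case~(iii) I proceed in the opposite direction, putting $\mu_{-\kappa}:=\nu$, $\varepsilon_{-\kappa}:=\nu(\{0\})$, and
\begin{equation*}
\mu_{-\kappa+l}(\sigma) := \frac{1}{\big|\prod_{k=1}^{l}\lambda_{-\kappa+k}\big|^2} \int_\sigma s^l\, \D\nu(s), \quad \varepsilon_{-\kappa+l}:=0, \quad l=1,\ldots,\kappa;
\end{equation*}
condition \eqref{prob} makes each $\mu_{-\kappa+l}$ a probability measure, while \eqref{prob'} guarantees that the $\mu_0$ produced at the end of this chain coincides with the one prescribed above at the branching vertex.

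The main obstacle is precisely this compatibility at $0$: the measure obtained by pushing the tower of backward measures forward along the chain must agree with the measure obtained by averaging the $\mu_i$ over the children of $0$. This is exactly what \eqref{zgod'} encodes in cases~(ii) and~(iv) and what \eqref{prob'} encodes in case~(iii). Once all local instances of \eqref{muu+} have been checked, the subnormality of $\slam$ follows from Theorem~\ref{main}.
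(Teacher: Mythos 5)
Your proposal is correct and follows essentially the same route as the paper: in each of the four cases you build the same system of probability measures $\{\mu_v\}$ and nonnegative constants $\{\varepsilon_v\}$ (pushforwards of the $\mu_i$ along the branches, the averaged measure at $0$, the $s^{-(l+1)}$-weighted measures along the backward chain in (ii) and (iv), and the $s^l$-pushforwards of $\nu$ in (iii)) and then invoke Theorem~\ref{main}. The compatibility check at the branching vertex and the observation that the chain measures have no atom at $0$ are exactly the points the paper's verification also relies on.
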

   \begin{proof}
(i) Define the system of Borel probability measures
$\{\mu_v\}_{v\in V_{\eta,0}}$ on $\rbb_+$ and the
system $\{\varepsilon_v\}_{v\in V_{\eta,0}}$ of
nonnegative real numbers by
   \begin{align*}
\mu_{0}(\sigma) & = \sum_{i=1}^{\eta} |\lambda_{i,1}|^2
\int_\sigma \frac 1 s \D \mu_i(s) + \varepsilon_0
\delta_0(\sigma), \quad \sigma \in \borel{\rbb_+},
   \\
\varepsilon_0 & = 1 - \sum_{i=1}^\eta
|\lambda_{i,1}|^2 \int_0^\infty \frac 1 s\, \D
\mu_i(s),
   \end{align*}
and
   \begin{equation} \label{kap0}
\left\{
   \begin{aligned}
\mu_{i,n}(\sigma) & = \frac{1}{\|\slam^{n-1}
e_{i,1}\|^2} \int_{\sigma} s^{n-1} \D \mu_i(s), \quad
\sigma \in \borel{\rbb_+}, \, i \in J_\eta, \, n \in
\nbb,
   \\[1ex]
\varepsilon_{i,n} & = 0, \quad i \in J_\eta, \, n \in
\nbb.
   \end{aligned}
\right.
   \end{equation}
(We write $\mu_{i,j}$ and $\varepsilon_{i,j}$ instead
of the more formal expressions $\mu_{(i,j)}$ and
$\varepsilon_{(i,j)}$.) Clearly $\mu_{i,1}=\mu_i$ for
all $i \in J_\eta$. Using \eqref{zgod0} and
\eqref{zgod}, we verify that the systems
$\{\mu_v\}_{v\in V_{\eta,0}}$ and
$\{\varepsilon_v\}_{v\in V_{\eta,0}}$ are well-defined
and satisfy the assumptions of Theorem \ref{main}.
Hence $\slam$ is subnormal.

(ii) Define the systems $\{\mu_v\}_{v\in
V_{\eta,\kappa}}$ and $\{\varepsilon_v\}_{v\in
V_{\eta,\kappa}}$ by \eqref{kap0} and
   \begin{align}\label{literki1}
\mu_{0}(\sigma) & = \sum_{i = 1}^{\eta}
|\lambda_{i,1}|^2 \int_\sigma \frac 1 s \D \mu_i(s),
\quad \sigma \in \borel{\rbb_+},
   \\   \label{literki2}
   \mu_{-l} (\sigma) & = \Big|\prod_{j=0}^{l-1}
\lambda_{-j}\Big|^2 \sum_{i=1}^{\eta} |\lambda_{i,1}|^2
\int_{\sigma} \frac 1 {s^{l+1}}\, \D \mu_i(s), \quad
\sigma \in \borel{\rbb_+}, \, l\in J_{\kappa-1},
   \\ \label{literki3}
   \mu_{-\kappa} (\sigma) & =
\Big|\prod_{j=0}^{\kappa-1} \lambda_{-j}\Big|^2
\sum_{i=1}^{\eta} |\lambda_{i,1}|^2 \int_{\sigma} \frac
1 {s^{\kappa+1}}\, \D \mu_i(s) + \varepsilon_{-\kappa}
\delta_0(\sigma), \hspace{0.8ex} \sigma \in
\borel{\rbb_+}, \
   \\  \label{literki4}
\varepsilon_v & =
   \begin{cases}
0 & \text{ if } v\in V_{\eta,\kappa}^\circ,
   \\
   1 - \Big|\prod_{j=0}^{\kappa-1}
\lambda_{-j}\Big|^2\sum_{i=1}^\eta|\lambda_{i,1}|^2
\int_0^\infty \frac 1 {s^{\kappa+1}} \D \mu_i(s) &
\text{ if } v = - \kappa.
   \end{cases}
   \end{align}
Applying \eqref{zgod0}, \eqref{zgod'}, \eqref{widly1}
and \eqref{widly1'}, we verify that the systems
$\{\mu_v\}_{v\in V_{\eta,\kappa}}$ and
$\{\varepsilon_v\}_{v\in V_{\eta,\kappa}}$ are
well-defined and satisfy the assumptions of Theorem
\ref{main}. Therefore $\slam$ is subnormal.

(iii) First note that $\|\slam^n e_{-\kappa}\|^2 =
\Big|\prod_{j=\kappa-n}^{\kappa-1}\lambda_{-j}\Big|^2$
for $n \in J_\kappa$. Define the systems
$\{\mu_v\}_{v\in V_{\eta,\kappa}}$ and
$\{\varepsilon_v\}_{v\in V_{\eta,\kappa}}$ by
\eqref{kap0} and
   \begin{align*}
\mu_{-l}(\sigma) & = \frac{1}{\|\slam^{-l + \kappa}
e_{-\kappa}\|^2}\int_\sigma s^{-l + \kappa} \D\nu(s),
\quad \sigma \in \borel{\rbb_+},\, l\in J_\kappa \cup
\{0\},
   \\
\varepsilon_v & =
   \begin{cases}
0 & \text{ if } v\in V_{\eta,\kappa}^\circ,
   \\
\nu(\{0\}) & \text{ if } v = - \kappa.
   \end{cases}
   \end{align*}
Clearly $\mu_{-\kappa}=\nu$, which together with
\eqref{zgod0}, \eqref{prob} and \eqref{prob'} implies
that the systems $\{\mu_v\}_{v\in V_{\eta,\kappa}}$
and $\{\varepsilon_v\}_{v\in V_{\eta,\kappa}}$ satisfy
the assumptions of Theorem \ref{main}. As a
consequence, $\slam$ is subnormal.

(iv) Define the system $\{\mu_v\}_{v\in
V_{\eta,\kappa}}$ by \eqref{kap0}, \eqref{literki1}
and \eqref{literki2}. Arguing as in the proof of (ii),
we see that the systems $\{\mu_v\}_{v\in
V_{\eta,\kappa}}$ and $\{\varepsilon_v\}_{v\in
V_{\eta,\kappa}}$ with $\varepsilon_v\equiv 0$ satisfy
the assumptions of Theorem \ref{main}, and so $\slam$
is subnormal.
   \end{proof}
It turns out that conditions (ii) and (iii) of Theorem
\ref{omega2} are equivalent without assuming that
\eqref{zgod0} is satisfied.
   \begin{lem} \label{IBJ}
Let $\slam$ be a weighted shift on
$\tcal_{\eta,\kappa}$ with nonzero weights $\lambdab =
\{\lambda_v\}_{v \in V_{\eta,\kappa}^\circ}$ such that
$e_0 \in \dzn{\slam}$ and let $\{\mu_i\}_{i=1}^\eta$
be a sequence of Borel probability measures on
$\rbb_+$. Then conditions {\em (ii)} and {\em (iii)}
of Theorem {\em \ref{omega2}} $($with the same
$\kappa$$)$ are equivalent.
   \end{lem}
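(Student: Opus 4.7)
The plan is to prove each implication by direct measure-theoretic manipulation, viewing the measure $\nu$ of (iii) as what will ultimately serve as the measure assigned to the vertex $-\kappa$ in the proof of Theorem \ref{omega2}(ii). Throughout I abbreviate $C_l := \big|\prod_{j=0}^{l-1}\lambda_{-j}\big|^2$ (with $C_0=1$) and $L(l) := \sum_{i=1}^\eta |\lambda_{i,1}|^2 \int_0^\infty s^{-(l+1)}\,\D\mu_i(s)$ for $l \in \zbb_+$.

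For (iii)$\Rightarrow$(ii), I would first evaluate \eqref{prob'} at $\sigma=\{0\}$ to deduce $\mu_i(\{0\})=0$ for each $i$ (using that all $\lambda_{i,1}$ are nonzero and the convention $0 \cdot \infty = 0$), so that all integrals below are unambiguous. Then taking $\sigma=\rbb_+$ in \eqref{prob'} and comparing with the $n=\kappa$ instance of \eqref{prob} yields $C_\kappa L(0)=C_\kappa$, which is \eqref{zgod'}. Next, for each $l \in J_{\kappa-1}$ I integrate $s\mapsto s^{-l}$ against the measure identity \eqref{prob'}: the left-hand side becomes $\int_0^\infty s^{\kappa-l}\,\D\nu(s) = C_\kappa/C_l$ by \eqref{prob} with $n=\kappa-l \in J_\kappa$, while the right-hand side becomes $C_\kappa L(l)$, giving \eqref{widly1}. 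Finally, integrating $s\mapsto s^{-\kappa}$ against \eqref{prob'}, the left-hand side collapses to $\nu((0,\infty))=1-\nu(\{0\}) \Le 1$ while the right-hand side is $C_\kappa L(\kappa)$, giving \eqref{widly1'}.

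For (ii)$\Rightarrow$(iii), I would first observe that \eqref{zgod'}, together with the nonvanishing of the $\lambda_{i,1}$ and the convention $0 \cdot \infty = 0$, forces $\mu_i(\{0\})=0$ for every $i \in J_\eta$. Then I define
\[
\nu(\sigma) := C_\kappa \sum_{i=1}^\eta |\lambda_{i,1}|^2 \int_\sigma s^{-(\kappa+1)}\,\D\mu_i(s) + \big(1 - C_\kappa L(\kappa)\big)\, \delta_0(\sigma), \quad \sigma\in\borel{\rbb_+},
\]
which is a nonnegative Borel measure of total mass $1$ thanks to \eqref{widly1'}. The identity \eqref{prob'} then follows at once from $s^\kappa \cdot s^{-(\kappa+1)} = s^{-1}$ (the $\delta_0$-term contributes $0$ since $\kappa \Ge 1$), and for each $n \in J_\kappa$ the identity \eqref{prob} reduces to $C_\kappa L(\kappa-n) = C_\kappa/C_{\kappa-n}$, which in turn is \eqref{zgod'} when $n=\kappa$ and \eqref{widly1} when $n \in J_{\kappa-1}$.

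The main obstacle is purely bookkeeping: aligning the two parametrizations of the products $\prod_{j=0}^{l-1}\lambda_{-j}$ and $\prod_{j=\kappa-n}^{\kappa-1}\lambda_{-j}$ across the equivalence, and legitimizing the termwise integration of negative powers of $s$ near $0$ via the observation $\mu_i(\{0\}) = 0$. Once those points are settled, every remaining step is a routine substitution and change-of-variables computation on positive Borel measures.
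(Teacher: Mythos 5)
Your argument is correct and follows essentially the same route as the paper's: your $\nu$ in (ii)$\Rightarrow$(iii) is exactly the measure $\mu_{-\kappa}$ of \eqref{literki3}, and both directions rest on the same family of identities $\int_\sigma s^n \D\nu(s) = \big|\prod_{j=0}^{\kappa-1}\lambda_{-j}\big|^2\sum_{i}|\lambda_{i,1}|^2\int_\sigma s^{-(\kappa+1-n)}\D\mu_i(s)$, which the paper records as \eqref{intnu}. The only difference is organizational — you obtain these identities in one shot by integrating $s^{-l}$ against the measure identity \eqref{prob'} (having first secured $\mu_i(\{0\})=0$), whereas the paper reaches the same point by reverse induction on $n$ and by passing through the intermediate probability measures $\mu_0,\dots,\mu_{-(\kappa-1)}$ — and your handling of the mass at $0$ (forcing $\mu_i(\{0\})=0$ and getting only the inequality \eqref{widly1'} from $\nu(\{0\})\Ge 0$) is exactly right.
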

   \begin{proof}
(ii)$\Rightarrow$(iii) Let $\{\mu_{-l}\}_{l=0}^\kappa$
be the Borel probability measures on $\rbb_+$ defined
by \eqref{literki1}, \eqref{literki2} and
\eqref{literki3} with $\varepsilon_{-\kappa}$ given by
\eqref{literki4}. Set $\nu=\mu_{-\kappa}$. It follows
from \eqref{literki3} that for every $n \in J_\kappa$,
   \begin{align}  \label{intnu}
\int_\sigma s^n \D \nu(s) =
\Big|\prod_{j=0}^{\kappa-1} \lambda_{-j}\Big|^2 \;
\sum_{i=1}^\eta |\lambda_{i,1}|^2 \int_\sigma
\frac{1}{s^{\kappa + 1 - n}} \D \mu_i(s), \quad \sigma
\in \borel{\rbb_+}.
   \end{align}
This immediately implies \eqref{prob'}. By
\eqref{literki1}, \eqref{literki2} and \eqref{intnu},
we have
   \begin{align*}
\int_\sigma s^n \D \nu(s) =
   \begin{cases}
\cfrac{|\prod_{j=0}^{\kappa-1}
\lambda_{-j}|^2}{|\prod_{j=0}^{\kappa-n-1}
\lambda_{-j}|^2} \, \mu_{-(\kappa-n)}(\sigma) & \text{
if } n \in J_{\kappa-1},
   \\[3.5ex]
|\prod_{j=0}^{\kappa-1} \lambda_{-j}|^2 \,
\mu_{0}(\sigma) & \text{ if } n=\kappa,
   \end{cases}
   \end{align*}
for all $\sigma \in \borel{\rbb_+}$. Substituting
$\sigma=\rbb_+$ and using the fact that
$\{\mu_{-l}\}_{l=0}^{\kappa-1}$ are probability
measures, we obtain \eqref{prob}.

(iii)$\Rightarrow$(ii) Given $n \in J_\kappa$, we
define the positive Borel measure $\rho_n$ on $\rbb_+$
by $\rho_n(\sigma) = \int_\sigma s^n \D \nu(s)$ for
$\sigma \in \borel{\rbb_+}$. By \eqref{prob'},
equality \eqref{intnu} holds for $n=\kappa$. If this
equality holds for a fixed $n \in J_\kappa \setminus
\{1\}$, then $\rho_n(\{0\})=0$ and consequently
   \begin{align*}
\int_\sigma s^{n-1} \D \nu(s) = \int_\sigma
\frac{1}{s} \D \rho_n(s) \overset{\eqref{intnu}}=
\Big|\prod_{j=0}^{\kappa-1} \lambda_{-j}\Big|^2 \;
\sum_{i=1}^\eta |\lambda_{i,1}|^2 \int_\sigma
\frac{1}{s^{\kappa + 1 - (n-1)}} \D \mu_i(s)
   \end{align*}
for all $\sigma \in \borel{\rbb_+}$. Hence, by reverse
induction on $n$, \eqref{intnu} holds for all $n\in
J_\kappa$. Substituting $\sigma=\rbb_+$ into
\eqref{intnu} and using \eqref{prob}, we obtain
\eqref{zgod'} and \eqref{widly1}. It follows from
\eqref{intnu}, applied to $n=1$, that for every
$\sigma \in \borel{\rbb_+}$,
   \begin{multline}  \label{nusig}
\nu(\sigma) = \nu(\sigma \setminus \{0\}) + \nu(\{0\})
\delta_0(\sigma) = \int_\sigma \frac{1}{s} \D
\rho_1(s) + \nu(\{0\}) \delta_0(\sigma)
   \\
\overset{\eqref{intnu}}= \Big|\prod_{j=0}^{\kappa-1}
\lambda_{-j}\Big|^2 \; \sum_{i=1}^\eta
|\lambda_{i,1}|^2 \int_\sigma \frac{1}{s^{\kappa + 1}}
\D \mu_i(s) + \nu(\{0\}) \delta_0(\sigma).
   \end{multline}
Substituting $\sigma=\rbb_+$ into \eqref{nusig} and
using the fact that $\nu(\rbb_+)=1$, we obtain
\eqref{widly1'}. This completes the proof.
   \end{proof}
Under the assumption of determinacy, the sufficient
conditions for subnormality appearing in Theorem
\ref{omega2} become necessary (see also Remark
\ref{deterrem} below).
   \begin{thm}  \label{deter}
   Let $\slam$ be a subnormal weighted shift on
$\tcal_{\eta,\kappa}$ with nonzero weights $\lambdab =
\{\lambda_v\}_{v \in V_{\eta,\kappa}^\circ}$. If $e_0
\in \dzn{\slam}$ and
   \begin{align}    \label{detn+1}
\text{$\Big\{\sum_{i=1}^\eta \Big|\prod_{j=1}^{n+1}
\lambda_{i,j}\Big|^2\Big\}_{n=0}^\infty$ is a determinate
Stieltjes moment sequence,}
   \end{align}
then the following four assertions hold\/{\em :}
   \begin{enumerate}
   \item[(i)] if $\kappa = 0$, then there exists a
sequence $\{\mu_i\}_{i=1}^\eta$ of Borel probability
measures on $\rbb_+$ that satisfy \eqref{zgod0} and
\eqref{zgod},
   \item[(ii)] if $0 < \kappa < \infty$, then there
exists a sequence $\{\mu_i\}_{i=1}^\eta$ of Borel
probability measures on $\rbb_+$ that satisfy
\eqref{zgod0}, \eqref{zgod'}, \eqref{widly1} and
\eqref{widly1'},
   \item[(iii)] if $0 < \kappa < \infty$, then there
exist a sequence $\{\mu_i\}_{i=1}^\eta$ of Borel
probability measures on $\rbb_+$ and a Borel
probability measure $\nu$ on $\rbb_+$ that satisfy
\eqref{zgod0}, \eqref{prob} and \eqref{prob'},
   \item[(iv)] if $\kappa=\infty$, then there
exists a sequence $\{\mu_i\}_{i=1}^\eta$ of Borel
probability measures on $\rbb_+$ that satisfy
\eqref{zgod0}, \eqref{zgod'} and \eqref{widly1}.
   \end{enumerate}
Moreover, if\;\footnote{\;\label{stopa}Equivalently,
by \eqref{detn+2}, $e_0$ is a quasi-analytic vector of
$\slam$ (see \cite{nuss} for the definition).}
$\sum_{n=1}^\infty \big(\sum_{i=1}^\eta
\big|\prod_{j=1}^{n}
\lambda_{i,j}\big|^2\big)^{-\nicefrac{1}{2n}} =
\infty$, then \eqref{detn+1} is satisfied.
   \end{thm}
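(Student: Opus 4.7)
The plan is to extract all the required measures directly from a normal extension of $\slam$. Since $\slam$ is subnormal and $\escr \subseteq \dzn{\slam}$ (by the Caution, as $e_0 \in \dzn{\slam}$), fix a normal extension $N$ of $\slam$ on some Hilbert space $\kk$ with spectral measure $E$, and for each $v \in V_{\eta,\kappa}$ put
\[
\rho_v(\sigma) = \|E(\psi^{-1}(\sigma))\, e_v\|^2, \qquad \sigma \in \borel{\rbb_+},
\]
where $\psi(z) = |z|^2$. By the spectral theorem, $\rho_v$ is a Borel probability measure on $\rbb_+$ satisfying $\int_0^\infty s^n\,\D\rho_v(s) = \|\slam^n e_v\|^2$ for every $n \in \zbb_+$. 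I shall take $\mu_i := \rho_{(i,1)}$ for $i \in J_\eta$, and (in case (iii)) $\nu := \rho_{-\kappa}$. Then \eqref{zgod0} is immediate, and in case (iii) so is \eqref{prob}.

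The key identity, to be established for every $u \in V_{\eta,\kappa}$ with $\dzi u \neq \varnothing$, is
\[
s\,\rho_u = \sum_{v \in \dzi u} |\lambda_v|^2\, \rho_v \quad\text{as Borel measures on }\rbb_+.
\]
Because the subtrees rooted at distinct children of $u$ are disjoint, the vectors $\{\slam^n e_v\}_{v \in \dzi u}$ are pairwise orthogonal, so $\|\slam^{n+1} e_u\|^2 = \sum_v |\lambda_v|^2 \|\slam^n e_v\|^2$ and both sides of the displayed identity have the same Hamburger moments. To promote equality of moments to equality of measures, the shifted Stieltjes sequence $\{\|\slam^{n+1} e_u\|^2\}_{n=0}^\infty$ must be determinate. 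For $u=0$ this is precisely \eqref{detn+1}; for $u=-l$ with $l \Ge 1$ the recursion $\|\slam^{n+1} e_{-l}\|^2 = |\lambda_{-l+1}|^2 \|\slam^n e_{-l+1}\|^2$ combined with the structural identity $\|\slam^n e_{-l}\|^2 = |\prod_{j=0}^{l-1} \lambda_{-j}|^2 \|\slam^{n-l} e_0\|^2$ (valid for $n \Ge l$) allows a short inductive shift-of-moments argument to propagate determinacy along the left branch. Carrying out this determinacy bookkeeping cleanly is the main technical point; once it is settled, the displayed identity is immediate.

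With the identity in hand, iterating it along the left branch gives $s^l\,\D\rho_{-l}(s) = |\prod_{j=0}^{l-1} \lambda_{-j}|^2\,\D\rho_0(s)$ on $\rbb_+$ for every $l \in J_\kappa \cup \{0\}$. Since $s\,\rho_u$ never carries mass at $\{0\}$, the identity at $u = 0$ forces $\mu_i(\{0\}) = 0$ for all $i \in J_\eta$, and the identity at $u = -l-1$ forces $\rho_{-l}(\{0\}) = 0$ whenever $-l-1 \in V_{\eta,\kappa}$ (i.e.\ $l \Le \kappa-1$ if $\kappa<\infty$, or every $l \Ge 0$ if $\kappa = \infty$). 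Combining the identities at $u = 0$ and $u = -l$ then yields
\[
\Big|\prod_{j=0}^{l-1} \lambda_{-j}\Big|^2 \sum_{i=1}^\eta |\lambda_{i,1}|^2 \int_0^\infty \frac{1}{s^{l+1}}\,\D\mu_i(s) = 1 - \rho_{-l}(\{0\})
\]
for every admissible $l$ (with the convention $\prod_{j=0}^{-1} := 1$). Evaluating at the appropriate values of $l$ produces \eqref{zgod} (in case (i), $\kappa = 0$), \eqref{zgod'}, \eqref{widly1}, and \eqref{widly1'}, settling (i), (ii), and (iv). For (iii), \eqref{prob'} follows by specializing the iterated identity to $l = \kappa$ (combined with $s\,\rho_0 = \sum_i |\lambda_{i,1}|^2 \mu_i$ and $\rho_0(\{0\}) = 0$); alternatively one may simply invoke Lemma \ref{IBJ} to reduce (iii) to (ii).

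For the concluding ``moreover'' clause, the stated sum-condition is precisely the Carleman-type quasi-analyticity criterion for $e_0$, so $\{\|\slam^n e_0\|^2\}_{n=0}^\infty$ is a determinate Stieltjes moment sequence; a short shift-of-moments argument (analogous in spirit to the one invoked above) then transfers determinacy to $\{\|\slam^{n+1} e_0\|^2\}_{n=0}^\infty$, yielding \eqref{detn+1}.
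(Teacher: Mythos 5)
Your proof of assertions (i)--(iv) is correct and, despite the more explicit packaging, is essentially the paper's argument. The representing measures $\rho_v$ you extract from the spectral measure of a normal extension are what the paper obtains by citing \cite[Proposition 4.1.1]{b-j-j-sA}; your key identity $s\,\rho_u=\sum_{v\in\dzi{u}}|\lambda_v|^2\rho_v$, derived from moment matching plus determinacy of $\{\|\slam^{n+1}e_u\|^2\}_{n=0}^\infty$, is exactly the content of \cite[Lemma 4.1.3]{b-j-j-sA} at the branching vertex and of Lemma \ref{charsub-1} along the chain $0,-1,-2,\ldots$; and your observation that $s\,\rho_u$ has no atom at $0$, so that $\rho_{-l}(\{0\})=0$ whenever $-l-1$ is a vertex, is the paper's mechanism (via \eqref{jabko}) for forcing $\varepsilon_0=0$ and for deciding which of \eqref{zgod}, \eqref{zgod'}, \eqref{widly1}, \eqref{widly1'} are equalities and which are only inequalities. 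The determinacy bookkeeping you defer does close: if $\mu,\mu'$ both represent $\{t_n\}_n$, then $s\,\D\mu$ and $s\,\D\mu'$ both represent $\{t_{n+1}\}_n$, so determinacy of the shifted sequence yields $\mu=\mu'$; combined with $\{\|\slam^{n+1}e_{-l}\|^2\}_n=|\lambda_{-l+1}|^2\{\|\slam^{n}e_{-l+1}\|^2\}_n$ this walks determinacy from \eqref{detn+1} down the left branch, which is precisely assertion (ii) of Lemma \ref{charsub-1}. Your treatment of (iii), whether direct or via Lemma \ref{IBJ}, also matches the paper.

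The one genuine flaw is in the ``moreover'' clause. You propose to obtain determinacy of $\{\|\slam^{n}e_0\|^2\}_n$ from Carleman's criterion and then ``transfer'' it to $\{\|\slam^{n+1}e_0\|^2\}_n$ by a shift argument. But the shift argument only runs the other way: Stieltjes determinacy of $\{t_{n+1}\}_n$ implies that of $\{t_n\}_n$, not conversely --- determinacy of $\{t_n\}_n$ only controls those representing measures $\mu$ of $\{t_{n+1}\}_n$ with $\int_0^\infty\frac1s\,\D\mu(s)\Le t_0$, as the bijection in Lemma \ref{charsub-1}\,(i) makes explicit, and nothing rules out representing measures violating that bound. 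The correct route, which is what lies behind the paper's reference to the proof of \cite[Theorem 5.3.1]{b-j-j-sA}, is to verify the Stieltjes--Carleman condition for the shifted sequence itself, i.e.\ $\sum_{n\Ge 1}\|\slam^{n+1}e_0\|^{-1/n}=\infty$, deducing it from the hypothesis $\sum_{n\Ge 1}\|\slam^{n}e_0\|^{-1/n}=\infty$ by an elementary estimate using the log-convexity of $n\mapsto\|\slam^{n}e_0\|^2$. This is a local repair and does not affect assertions (i)--(iv).
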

   \begin{proof}
It is easily seen that
   \begin{align} \label{detn+2}
\|\slam^{n+1} e_0\|^2 = \sum_{i=1}^\eta
\big|\prod_{j=1}^{n+1} \lambda_{i,j}\big|^2, \quad n
\in \zbb_+.
   \end{align}
By \cite[Proposition 4.1.1]{b-j-j-sA}, for every $u
\in V_{\eta,\kappa}$ the sequence $\{\|\slam^n
e_u\|^2\}_{n=0}^\infty$ is a Stieltjes moment
sequence. For each $i \in J_\eta$, we choose a
representing measure $\mu_i$ of $\{\|\slam^{n}
e_{i,1}\|^2\}_{n=0}^\infty$. It is easily seen that
\eqref{zgod0} holds. Since, by \eqref{detn+1} and
\eqref{detn+2}, the Stieltjes moment sequence
$\{\|\slam^{n+1} e_{0}\|^2\}_{n=0}^\infty$ is
determinate, we infer from \cite[Lemma
4.1.3]{b-j-j-sA}, applied to $u=0$, that \eqref{zgod}
holds and $\{\|\slam^{n} e_{0}\|^2\}_{n=0}^\infty$ is
a determinate Stieltjes moment sequence with the
representing measure $\mu_0$ given by
   \begin{align} \label{muu+2}
\mu_0(\sigma) = \sum_{i=1}^\eta |\lambda_{i,1}|^2
\int_\sigma \frac 1 s \D \mu_i(s) + \varepsilon_0
\delta_0(\sigma), \quad \sigma \in \borel{\rbb_+},
      \end{align}
where $\varepsilon_0$ is a nonnegative real number. In
view of the above, assertion (i) is proved.

Suppose $0 < \kappa \Le \infty$. Since $\{\|\slam^{n}
e_{0}\|^2\}_{n=0}^\infty$ is a determinate Stieltjes
moment sequence, we deduce from Lemma \ref{charsub-1},
applied to $u=-1$, that $\{\|\slam^{n+1}
e_{-1}\|^2\}_{n=0}^\infty$ and $\{\|\slam^{n}
e_{-1}\|^2\}_{n=0}^\infty$ are determinate Stieltjes
moment sequences~ and
   \begin{align} \label{jabko}
& \int_0^\infty \frac 1 s \D \mu_0(s) \Le
\frac{1}{|\lambda_{0}|^2},
   \\
& \mu_{-1}(\sigma) = |\lambda_{0}|^2 \int_\sigma \frac
1 s \D \mu_0(s) + \varepsilon_{-1} \delta_0(\sigma),
\quad \sigma \in \borel{\rbb_+}, \label{jabko2}
   \end{align}
where $\mu_{-1}$ is the representing measure of
$\{\|\slam^{n} e_{-1}\|^2\}_{n=0}^\infty$ and
$\varepsilon_{-1}$ is a nonnegative real number.
Inequality \eqref{jabko} combined with equality
\eqref{muu+2} implies that $\varepsilon_0=0$ and
therefore that \eqref{widly1'} holds for $\kappa=1$.
Substituting $\sigma=\rbb_+$ into \eqref{muu+2}, we
obtain \eqref{zgod'}. This completes the proof of
assertion (ii) for $\kappa=1$. Note also that
equalities \eqref{muu+2} and \eqref{jabko2}, combined
with $\varepsilon_0=0$, yield
   \begin{align*}
\mu_{-1}(\sigma) = |\lambda_{0}|^2 \sum_{i=1}^\eta
|\lambda_{i,1}|^2 \int_\sigma \frac 1 {s^2} \D
\mu_i(s) + \varepsilon_{-1} \delta_0(\sigma), \quad
\sigma \in \borel{\rbb_+}.
   \end{align*}
If $\kappa > 1$, then arguing by induction, we
conclude that for every $k\in J_{\kappa}$ the
Stieltjes moment sequences $\{\|\slam^{n+1}
e_{-k}\|^2\}_{n=0}^\infty$ and $\{\|\slam^{n}
e_{-k}\|^2\}_{n=0}^\infty$ are determinate and
   \begin{align} \label{mu-l}
\mu_{-l}(\sigma) =
\Big|\prod_{j=0}^{l-1}\lambda_{-j}\Big|^2
\sum_{i=1}^\eta |\lambda_{i,1}|^2 \int_\sigma \frac 1
{s^{l+1}} \D \mu_i(s), \quad \sigma \in
\borel{\rbb_+}, \, l\in J_{\kappa - 1},
   \end{align}
where $\mu_{-l}$ is the representing measure of
$\{\|\slam^{n} e_{-l}\|^2\}_{n=0}^\infty$.
Substituting $\sigma=\rbb_+$ into \eqref{mu-l}, we
obtain \eqref{widly1}. This completes the proof of
assertion (iv). Finally, if $1 < \kappa < \infty$,
then again by Lemma \ref{charsub-1}, now applied to
$u=-\kappa$, we have $\int_0^\infty \frac 1 s \D
\mu_{-\kappa+1}(s) \Le
\frac{1}{|\lambda_{-\kappa+1}|^2}$. This inequality
together with \eqref{mu-l} yields \eqref{widly1'},
which completes the proof of assertion (ii).

(iii) can be deduced from (ii) via Lemma \ref{IBJ}.

To show the ``moreover'' part, we can argue as in the
proof of \cite[Theorem 5.3.1]{b-j-j-sA} (see also
Footnote \ref{stopa}). This completes the proof.
   \end{proof}
   \begin{rem}  \label{deterrem}
A careful look at the proof reveals that Theorem
\ref{deter} remains valid if instead of assuming that
$\slam$ is subnormal, we assume that $u$ is a
Stieltjes vertex for every $u \in \big\{-k\colon k\in
J_\kappa\big\} \sqcup \{0\} \sqcup \dzi{0}$.
   \end{rem}
   \begin{cor}   \label{hakol}
   Let $\slam$ be a weighted shift on
$\tcal_{\eta,\kappa}$ with nonzero weights $\lambdab =
\{\lambda_v\}_{v \in V_{\eta,\kappa}^\circ}$ such that
$e_0 \in \dzn{\slam}$. Suppose that $v$ is a Stieltjes
vertex for every $v \in \big\{-k\colon k\in
J_\kappa\big\} \sqcup \{0\} \sqcup \dzi{0}$ and that
$\{\|\slam^{n+1} e_0\|^2\}_{n=0}^\infty$ is a
determinate Stieltjes moment sequence. Then the
following three assertions hold\/{\em :}
   \begin{enumerate}
   \item[(i)] $\slam$ is subnormal,
   \item[(ii)] $\{\|\slam^{n+1}
e_{-j}\|^2\}_{n=0}^\infty$ is a determinate Stieltjes
moment sequence for every integer $j$ such that $0 \Le
j \Le \kappa$,
   \item[(iii)] $\slam$  satisfies the consistency
condition \eqref{alanconsi} at the vertex $u=-j$ for
every integer $j$ such that $0 \Le j \Le \kappa$.
   \end{enumerate}
   \end{cor}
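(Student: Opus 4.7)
The key initial observation is that, by \eqref{detn+2}, the hypothesis that $\{\|\slam^{n+1} e_0\|^2\}_{n=0}^\infty$ is a determinate Stieltjes moment sequence is precisely the determinacy condition \eqref{detn+1}. This, combined with the Stieltjes-vertex assumption of the corollary, allows us to apply the conclusions of Theorem \ref{deter} via Remark \ref{deterrem}, even though subnormality of $\slam$ is not yet in hand.

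For assertion (i), Theorem \ref{deter} (as strengthened by Remark \ref{deterrem}) supplies a sequence $\{\mu_i\}_{i=1}^\eta$ of Borel probability measures satisfying \eqref{zgod0} together with the clause appropriate to $\kappa$: namely \eqref{zgod} if $\kappa=0$, or \eqref{zgod'}, \eqref{widly1} and \eqref{widly1'} if $0 < \kappa < \infty$, or \eqref{zgod'} and \eqref{widly1} if $\kappa=\infty$. These are precisely the hypotheses of the corresponding clause of Theorem \ref{omega2}, which delivers subnormality of $\slam$.

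For assertion (ii), I would reread the proof of Theorem \ref{deter} rather than prove anything new. That argument constructs, for each $k \in J_\kappa$, the representing measure of $\{\|\slam^n e_{-k}\|^2\}_{n=0}^\infty$ by iterated use of Lemma \ref{charsub-1}(i), and Lemma \ref{charsub-1}(ii) yields at every inductive step that both $\{\|\slam^n e_{-k}\|^2\}_{n=0}^\infty$ and $\{\|\slam^{n+1} e_{-k}\|^2\}_{n=0}^\infty$ are determinate; the case $k=0$ is the standing hypothesis. For assertion (iii), the consistency condition \eqref{alanconsi} at $u=0$ is exactly \eqref{zgod}/\eqref{zgod'} already obtained. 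For $u = -j$ with $1 \Le j \Le \kappa$, the unique child of $-j$ is $-j+1$, so \eqref{alanconsi} reads $|\lambda_{-j+1}|^2 \int_0^\infty \frac{1}{s}\,\D\mu_{-j+1}(s) \Le 1$, where $\mu_{-j+1}$ is the unique (by (ii)) representing measure of $\{\|\slam^n e_{-j+1}\|^2\}_{n=0}^\infty$. Plugging in the explicit formula for $\mu_{-j+1}$ coming from the proof of Theorem \ref{deter} (i.e.\ \eqref{mu-l}, with \eqref{muu+2} handling $j=1$), the left-hand side becomes $\big|\prod_{k=0}^{j-1}\lambda_{-k}\big|^2 \sum_{i=1}^\eta |\lambda_{i,1}|^2 \int_0^\infty \frac{1}{s^{j+1}}\,\D\mu_i(s)$, which equals $1$ by \eqref{widly1} when $1 \Le j \Le \kappa - 1$ and is $\Le 1$ by \eqref{widly1'} when $j = \kappa$.

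The main obstacle is really just bookkeeping --- matching up the three cases of $\kappa$ and identifying the measures produced by Theorem \ref{deter} with those appearing in the consistency conditions \eqref{alanconsi}. Once (ii) secures uniqueness of the representing measures at the negative vertices, no substantive work beyond what is already contained in the proof of Theorem \ref{deter} is required.
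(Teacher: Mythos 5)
Your proposal is correct and follows essentially the same route as the paper: Remark \ref{deterrem} plus Theorem \ref{omega2} for (i), the determinacy statements already established in the proof of Theorem \ref{deter} for (ii), and the identities \eqref{zgod}, \eqref{zgod'}, \eqref{widly1}, \eqref{widly1'} for (iii). The only cosmetic difference is that for (iii) the paper simply cites \cite[Lemma 4.1.3\,(ii)]{b-j-j-sA} together with (ii), whereas you unpack the same content by substituting the explicit representing measures $\mu_{-l}$ from the proof of Theorem \ref{deter} into the consistency condition; both are valid, and your appeal to (ii) correctly handles the dependence of \eqref{alanconsi} on the choice of representing measures.
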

   \begin{proof}
   (i) In view of Remark \ref{deterrem}, there exists
a sequence $\{\mu_i\}_{i=1}^\eta$ of Borel probability
measures on $\rbb_+$ satisfying \eqref{zgod0} and one
of the conditions (i), (ii) and (iv) of Theorem
\ref{omega2}. Hence, by Theorem \ref{omega2}, $\slam$
is subnormal.

   (ii) See the proof of Theorem \ref{deter}.

   (iii) Apply (ii) and \cite[Lemma
4.1.3\,(ii)]{b-j-j-sA}.
   \end{proof}
   \begin{rem}
Note that the consistency condition \eqref{alanconsi}
at a vertex $u$ depends on the choice of the system
$\{\mu_v\}_{v\in\dzi{u}}$ of representing measures
$\mu_v$ of Stieltjes moment sequences $\{\|\slam^n
e_v\|^2\}_{n=0}^\infty$. However, by \cite[Lemma
4.1.3]{b-j-j-sA}, if the Stieltjes moment sequence
$\{\|\slam^{n+1} e_u\|^2\}_{n=0}^\infty$ is
determinate, then the consistency condition
\eqref{alanconsi} at $u$ is independent of the choice
of $\{\mu_v\}_{v\in\dzi{u}}$, i.e., it is satisfied
for every system $\{\mu_v\}_{v\in\dzi{u}}$ of
representing measures $\mu_v$ of Stieltjes moment
sequences $\{\|\slam^n e_v\|^2\}_{n=0}^\infty$. This
and assertion (ii) of Corollary \ref{hakol} justify
not mentioning explicitly representing measures in
assertion (iii) of this corollary.
   \end{rem}
   \section{\label{sec-in}Subnormality via subtrees}
Proposition \ref{subtree} below shows that the study
of subnormality of weighted shifts on rootless
directed trees can be reduced, in a sense, to the case
of directed trees with root. Theorem \ref{main}, which
is our main criterion for subnormality of weighted
shifts on directed trees, seems to be inapplicable in
this context. However, we can employ an inductive
limit approach.
   \begin{thm} \label{tw1+1}
Let $S$ be a densely defined operator in a complex
Hilbert space $\hh$. Suppose that there are a family
$\{\hh_\omega\}_{\omega \in \varOmega}$ of closed
linear subspaces of $\hh$ and an upward directed
family $\{\xx_\omega\}_{\omega \in \varOmega}$ of
subsets of $\hh$ such that
   \begin{enumerate}
   \item[(i)] $\xx_\omega \subseteq
\dzn{S}$ and $S^n(\xx_\omega) \subseteq \hh_\omega$
for all $n\in \zbb_+$ and $\omega \in \varOmega$,
   \item[(ii)] $\ff_\omega:=\lin \bigcup_{n=0}^\infty
S^n(\xx_\omega)$ is dense in $\hh_\omega$ for every
$\omega \in \varOmega$,
   \item[(iii)]  $S|_{\ff_\omega}$ is  a
subnormal operator in $\hh_\omega$ for every $\omega
\in \varOmega$,
   \item[(iv)]  $\ff:=\lin \bigcup_{n=0}^\infty
S^n\big(\bigcup_{\omega \in \varOmega}
\xx_\omega\big)$ is a core of $S$.
   \end{enumerate}
Then $S$ is subnormal.
   \end{thm}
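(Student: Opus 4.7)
The plan is to verify the hypotheses of the Stochel--Szafraniec characterization of subnormality for operators admitting an invariant core (\cite{StSz1}; see also \cite{c-s-sz}) on the subspace $\ff$ supplied by~(iv). First I would record the structural properties of $\ff$. Each generating vector $S^n x$ with $x\in \xx_\omega$ and $n\in\zbb_+$ lies in $\dzn{S}$ by~(i); since $S(S^n x)=S^{n+1}x\in \ff$, the subspace $\ff$ is $S$-invariant and contained in $\dzn{S}$. The same reasoning, together with~(ii), shows that each $\ff_\omega$ is an $S$-invariant subspace of $\dzn S$, sits inside $\hh_\omega$ by~(i), and is dense in $\hh_\omega$ by~(ii); in particular $S|_{\ff_\omega}$ is a well-defined operator in $\hh_\omega$.

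The pivotal observation is that the upward directedness of $\{\xx_\omega\}_{\omega\in\varOmega}$ yields the set-theoretic decomposition
\[
\ff=\bigcup_{\omega\in\varOmega}\ff_\omega.
\]
Indeed, any finite subset of $\ff$ is a linear combination of iterates $S^k x$ with $x$ drawn from a finite union $\xx_{\omega_1}\cup\dots\cup \xx_{\omega_m}$; directedness produces a single $\omega\in \varOmega$ with $\xx_{\omega_i}\subseteq \xx_\omega$ for every~$i$, so the entire collection falls inside $\ff_\omega$.

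By~(iii), each $S|_{\ff_\omega}$ is subnormal in $\hh_\omega$, so the forward direction of the Stochel--Szafraniec theorem produces the Halmos--Bram--Embry type inequality
\[
\sum_{i,j=0}^n \is{S^j f_i}{S^i f_j}\Ge 0,\quad f_0,\dots,f_n\in \ff_\omega,\; n\in\zbb_+,
\]
where the $\hh_\omega$-inner product is the restriction of that of $\hh$ because $\hh_\omega$ is a closed subspace of $\hh$. Combining this with the directedness reduction of the previous paragraph, the same inequality holds for arbitrary tuples in $\ff$. Since, by~(iv), $\ff$ is an $S$-invariant dense core of $S$ contained in $\dzn{S}$ and satisfies the positive-definiteness condition, the converse direction of the Stochel--Szafraniec criterion delivers the subnormality of $S$.

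The principal obstacle is pinpointing the precise form of the cited characterization: one must invoke the version that concludes subnormality from the Halmos--Bram--Embry type positivity on an $S$-invariant dense \emph{core} (not merely on all of $\dz{S}$ or on an arbitrary dense subspace), which is exactly what is provided by \cite{StSz1} in the formulation revisited in \cite{c-s-sz}. With that criterion in hand, the rest of the argument is bookkeeping on upward directed limits as above.
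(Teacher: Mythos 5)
Your argument matches the paper's proof in all essentials: both establish that $\{\ff_\omega\}_{\omega\in\varOmega}$ is upward directed with $S(\ff_\omega)\subseteq\ff_\omega$ and $\ff=\bigcup_{\omega\in\varOmega}\ff_\omega$, then transfer the finite positivity condition characterizing subnormality of operators with invariant domains (\cite{StSz1}, \cite[Theorem 21]{c-s-sz}) from each $\ff_\omega$ to $\ff$ via directedness, and finish with the core property of $\ff$. The one point to watch is that for unbounded operators the characterizing condition is the full positive definiteness over the product $*$-semigroup as in \cite{c-s-sz} rather than the plain Halmos--Bram inequality you wrote, but since that condition is still quantified over finite tuples from the invariant domain, your directedness reduction applies verbatim.
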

   \begin{proof}
The families $\{\ff_\omega\}_{\omega \in \varOmega}$
and $\{\hh_\omega\}_{\omega \in \varOmega}$ are upward
directed, $S(\ff_\omega) \subseteq \ff_\omega$ for all
$\omega \in \varOmega$, $\ff = \bigcup_{\omega \in
\varOmega} \ff_\omega$ and $S(\ff) \subseteq \ff$.
Hence, we can argue as in the proof of \cite[Theorem
3.1.1]{b-j-j-sA} by using \cite[Theorem 21]{c-s-sz}.
   \end{proof}
Let $\slam$ be a weighted shift on a directed tree
$\tcal$ with weights $\lambdab=\{\lambda_v\}_{v\in
V^\circ}$. Note that if $u \in V$, then the space
$\ell^2(\des{u})$ (which is regarded as a closed
linear subspace of $\ell^2(V)$) is invariant for
$\slam$, i.e.,
   \begin{align} \label{ilb}
\slam\big(\dz{\slam} \cap \ell^2(\des{u})\big)
\subseteq \ell^2(\des{u}).
   \end{align}
(For this, apply \eqref{lamtauf} and the inclusion
$\paa\big(V\setminus \big(\des{u} \cup
\Ko{\tcal}\big)\big) \subseteq V\setminus \des{u}$.)
Denote by $\slam|_{\ell^2(\des{u})}$ the operator in
$\ell^2(\des{u})$ given by
$\dz{\slam|_{\ell^2(\des{u})}} = \dz{\slam} \cap
\ell^2(\des{u})$ and $\slam|_{\ell^2(\des{u})}f =
\slam f$ for $f \in \dz{\slam|_{\ell^2(\des{u})}}$. It
is easily seen that $\slam|_{\ell^2(\des{u})}$
coincides with the weighted shift on the rooted
directed tree $(\des{u}, (\des{u}\times \des{u}) \cap
E)$ with weights $\{\lambda_v\}_{v \in
\des{u}\setminus \{u\}}$ (see \cite[Proposition
2.1.8]{j-j-s} for more details on this and related
subtrees).

   We are now ready to apply the aforementioned
inductive limit procedure to prove the unbounded
counterpart of \cite[Corollary 6.1.4]{j-j-s}.
   \begin{pro}\label{subtree} Let $\slam$ be a weighted
shift on a rootless directed tree $\tcal$ with weights
$\lambdab=\{\lambda_v\}_{v\in V^\circ}$. Suppose that
$\escr \subseteq \dzn{\slam}$. If $\varOmega$ is a
subset of $V$ such that $V=\bigcup_{\omega\in
\varOmega} \des{\omega}$, then the following
conditions are equivalent{\em :}
   \begin{enumerate}
   \item[(i)] $\slam$ is subnormal,
   \item[(ii)] for every $\omega\in \varOmega$,
$\slam|_{\ell^2(\des{\omega})}$ is subnormal as an
operator acting in $\ell^2(\des{\omega})$.
   \end{enumerate}
   \end{pro}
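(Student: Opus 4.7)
The plan is to establish (i)$\Leftrightarrow$(ii) by treating the two implications separately, with Theorem \ref{tw1+1} as the main tool for the nontrivial direction.

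For (i)$\Rightarrow$(ii) the argument is direct: if $N$ is a normal extension of $\slam$ on some $\kk \supseteq \ell^2(V)$, then for each $\omega \in \varOmega$ the invariance \eqref{ilb} turns $\slam|_{\ell^2(\des{\omega})}$ into a well-defined operator in $\ell^2(\des{\omega})$. Its domain contains $\{e_v : v \in \des{\omega}\} \subseteq \escr \subseteq \dzn{\slam}$, so it is densely defined, and as a restriction of $N$ it is subnormal.

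For (ii)$\Rightarrow$(i), apply Theorem \ref{tw1+1} with index set $\varOmega^{\ast}$ the collection of finite nonempty subsets of $\varOmega$ directed by inclusion, and, for each $\alpha \in \varOmega^{\ast}$, the data $W_\alpha := \bigcup_{\omega\in\alpha}\des{\omega}$, $\hh_\alpha := \ell^2(W_\alpha)$, $\xx_\alpha := \{e_v : v \in W_\alpha\}$. Conditions (i) and (ii) of Theorem \ref{tw1+1} are immediate from $\escr \subseteq \dzn{\slam}$, iteration of \eqref{ilb}, and the fact that $\lin\xx_\alpha$ is already dense in $\hh_\alpha$. For condition (iv), the hypothesis $V = \bigcup_{\omega\in\varOmega}\des{\omega}$ forces $\bigcup_\alpha \xx_\alpha = \{e_v : v\in V\}$, hence $\ff = \escr$; and a standard finite-support truncation argument shows that $\escr$ is a core of $\slam$: for $f \in \dz{\slam}$ and an exhaustion by finite sets $V_n \uparrow V$, the truncations $f_n := f\cdot \mathbf{1}_{V_n} \in \escr$ satisfy $f_n \to f$, while $\|\slam(f - f_n)\|^2 \to 0$ by dominated convergence applied to the summable sum $\sum_{v \in V^\circ}|\lambda_v|^2 |f(\pa v)|^2 = \|\slam f\|^2$.

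The substantive step, and main obstacle, is condition (iii) of Theorem \ref{tw1+1}. Here the plan is to first reduce to the case where $\alpha = \{\omega_1,\ldots,\omega_k\}$ consists of pairwise incomparable vertices in the tree order; this is harmless since removing any $\omega_i$ with $\des{\omega_i}\subseteq\des{\omega_j}$ preserves all the data $W_\alpha$, $\hh_\alpha$, $\xx_\alpha$, $\ff_\alpha$. After the reduction the family $\{\des{\omega_i}\}_{i=1}^k$ is pairwise disjoint, so $\hh_\alpha = \bigoplus_{i=1}^k \ell^2(\des{\omega_i})$, and a direct computation from \eqref{lamtauf} shows that the orthogonal direct sum $T_\alpha := \bigoplus_{i=1}^k \slam|_{\ell^2(\des{\omega_i})}$ is a densely defined operator on $\hh_\alpha$ that extends $\slam|_{\ff_\alpha}$. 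By hypothesis (ii) each summand is subnormal; taking the orthogonal direct sum of the corresponding normal extensions produces a normal extension of $T_\alpha$, so $\slam|_{\ff_\alpha}$ is subnormal as a restriction of $T_\alpha$. Theorem \ref{tw1+1} then yields (i).
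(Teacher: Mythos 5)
Your proof is correct, but the way you discharge condition (iii) of Theorem \ref{tw1+1} is genuinely different from the paper's. The paper keeps $\varOmega$ itself as the index set and observes (via \cite[Proposition 2.1.4]{j-j-s}, rootlessness and the covering hypothesis) that the family $\{\des{\omega}\}_{\omega\in\varOmega}$ is already upward directed under inclusion: any two of these descendant sets lie in a third one from the cover. Condition (iii) is then literally hypothesis (ii) of the proposition, since $\ff_\omega$ is a dense subspace of $\ell^2(\des{\omega})$ contained in $\dz{\slam|_{\ell^2(\des{\omega})}}$, and condition (iv) is quoted from \cite[Proposition 3.1.3]{j-j-s}. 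You instead force directedness by indexing over finite subsets of $\varOmega$, which shifts the burden onto proving subnormality over a finite union $W_\alpha$; you resolve that by pruning to incomparable vertices and invoking the nested-or-disjoint dichotomy for descendant sets, so that $\hh_\alpha$ splits as a finite orthogonal sum and a finite direct sum of normal extensions does the job. Note that this dichotomy is essentially the same structural fact (\cite[Proposition 2.1.4]{j-j-s}) the paper uses, just deployed differently; your route buys independence from rootlessness in the directedness step and a self-contained core argument, at the price of the extra direct-sum machinery. Two small points to tidy up: the claim $\ff=\escr$ is false in general, since $\slam^n e_v$ need not have finite support when the tree branches infinitely --- what you actually have, and all you need, is $\escr\subseteq\ff\subseteq\dz{\slam}$, so that $\ff$ inherits the core property from $\escr$; and in the truncation argument the exhaustion $V_n\uparrow V$ should be taken along the (countable) support of $f$ if $V$ is not assumed countable.
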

   \begin{proof}
(ii)$\Rightarrow$(i) Using an induction argument and
\eqref{ilb} one can show that $\slam^n e_v \in
\ell^2(\des{v}) \subseteq \ell^2(\des{u})$ for all
$n\in \zbb_+$, $v \in \des{u}$ and $u \in V$. Hence
   \begin{align*}
\xx_\omega := \lin\big\{e_v\colon v \in
\des{\omega}\big\} \subseteq \dzn{\slam} \text{ and }
\slam^n(\xx_\omega) \subseteq \ell^2(\des{\omega})
   \end{align*}
for all $\omega \in \varOmega$ and $n\in \zbb_+$. It
follows from \cite[Proposition 2.1.4]{j-j-s} and the
equality $V=\bigcup_{\omega\in \varOmega}
\des{\omega}$ that for each pair $(\omega_1,\omega_2)
\in \varOmega \times \varOmega$, there exists $\omega
\in \varOmega$ such that $\des{\omega_1} \cup
\des{\omega_2} \subseteq \des{\omega}$, and thus
$\{\xx_\omega\}_{\omega \in \varOmega}$ is an upward
directed family of subsets of $\ell^2(V)$. By applying
\cite[Proposition 3.1.3]{j-j-s} and Theorem
\ref{tw1+1} to $S=\slam$ and $\hh_\omega=
\ell^2(\des{\omega})$, we deduce that $\slam$ is
subnormal.

The reverse implication (i)$\Rightarrow$(ii) is
obvious because $\xx_\omega \subseteq
\dz{\slam|_{\ell^2(\des{\omega})}}$.
   \end{proof}
It follows from \cite[Proposition 2.1.6]{j-j-s} that if
$\tcal$ is a rootless directed tree, then
$V=\bigcup_{k=1}^\infty \des{\paa^k(u)}$ for every $u
\in V$, and so the set $\varOmega$ in Proposition
\ref{subtree} may always be chosen to be countably
infinite.
   \subsection*{Acknowledgements} A substantial part of
this paper was written while the second and the fourth
authors visited Kyungpook National University during
the autumn of 2010 and the spring of 2011. They wish
to thank the faculty and the administration of this
unit for their warm hospitality.
   \bibliographystyle{amsalpha}
   
   \end{document}